\newtheorem{obs} [subsection]{Remark}
\newtheorem{exm} [subsection]{Example}
\newtheorem{prop}[subsection]{Proposition}
\newtheorem{teor}[subsection]{Theorem}
\newtheorem{lema}[subsection]{Lemma}
\newtheorem{cor} [subsection]{Corollary}
\def\sdepth{\operatorname{sdepth}}
\def\depth{\operatorname{depth}}
\begin{document}
\selectlanguage{english}
\frenchspacing

\large
\begin{center}
\textbf{Stanley depth of monomial ideals in three variables}

Mircea Cimpoea\c s
\end{center}
\normalsize

\begin{abstract}
We show that $\depth(S/I)=0$ if and only if $\sdepth(S/I)=0$, where $I\subset S=K[x_1,\ldots,x_n]$ is a monomial ideal. We
give an algorithm to compute the Stanley depth of $S/I$, where $I\subset S=K[x_1,x_2,x_3]$ is a monomial ideal. Also, we prove that a monomial ideal $I\subset K[x_1,x_2,x_3]$ minimally generated by three monomials has $\sdepth(I)=2$.

\vspace{5 pt} \noindent \textbf{Keywords:} Stanley depth, monomial ideal.

\vspace{5 pt} \noindent \textbf{2000 Mathematics Subject
Classification:}Primary: 13H10, Secondary: 13P10.
\end{abstract}

\section*{Introduction}

Let $K$ be a field and $S=K[x_1,\ldots,x_n]$ the polynomial ring over $K$.
Let $J\subset I\subset S$ be two monomial ideals. A Stanley decomposition of $I/J$ is a decomposition $\mathcal D: I/J = \bigoplus_{i=1}^ru_i K[Z_i]$ as $K$-vector space, where $u_i\in S$ are monomials and $Z_i\subset\{x_1,\ldots,x_n\}$. We denote $\sdepth(\mathcal D)=min_{i=1}^r |Z_i|$ and $\sdepth(I/J)=max\{\sdepth(\mathcal D)|\;\mathcal D$ is a Stanley decomposition of $I/J\}$. The number $\sdepth(I/J)$ is called the \emph{Stanley depth} of $I/J$. Herzog, Vladoiu and Zheng show in \cite{hvz} that this invariant can be computed in a finite number of steps.
There are two important particular cases. If $I\subset S$ is a monomial ideal, we are interested in computing $\sdepth(S/I)$ and $\sdepth(I)$. There are some papers regarding this problem, like \cite{hvz},\cite{asia},\cite{sum}, \cite{shen} and \cite{mir}. Stanley's conjecture says that $\sdepth(S/I)\geq \depth(S/I)$, or in the general case, $\sdepth(M)\geq \depth(M)$, where $M$ is a finitely generated multigraded $S$-module. The Stanley conjecture for $S/I$ was proved for $n\leq 5$ and in other special cases, but it remains open in the general case. See for instance, \cite{apel}, \cite{hsy}, \cite{jah}, \cite{pops} and \cite{popi}.

Let $I\subset S$ be a monomial ideal. We assume that $G(I)=\{v_1,\ldots,v_m\}$, where $G(I)$ is the set of minimal monomial generators of $I$. We denote $g(I)=|G(I)|$, the number of minimal generators of $I$. Let $v=GCD(u|\; u\in G(I))$. It follows that
$I=vI'$, where $I'=(I:v)$. For a monomial $u\in S$, we denote $supp(u)=\{x_i:\;x_i|u\}$. We denote $supp(I)=\{x_i:\;x_i|u$ for some $u\in G(I)\}$. We denote $c(I)=|supp(I')|$. 
In the first section, we prove results regarding some relations between $\sdepth(S/I)$, $\sdepth(I)$, $g(I)$ and $c(I)$. 
We prove that $depth(S/I)=0$ if and only if $\sdepth(S/I)=0$, see Corollary $1.6$. Thus, the Stanley's conjecture is true for $S/I$, when $\sdepth(S/I)=0$. In the second section, we give an algorithm to compute $\sdepth(S/I)$, where $I\subset S=K[x_1,x_2,x_3]$ is a monomial ideal, see Theorem $2.3$. We prove that a monomial ideal $I\subset K[x_1,x_2,x_3]$ minimally generated by three monomials has $\sdepth(I)=2$, see Theorem $2.4$. Also, if $I\subset K[x_1,x_2,x_3]$ is saturated, we prove that $\sdepth(I)\geq 2$, see Proposition $2.8$.

\textbf{Aknowledgements}. The author would like to express his gratitude to the organizers of PRAGMATIC 2008, Catania, Italy and especially to Professor Jurgen Herzog.

\footnotetext[1]{This paper was supported by CNCSIS, ID-PCE, 51/2007}

\section{Preliminaries results}

Firstly, we recall the following result of Herzog, Vladoiu and Zheng.

\begin{lema}\cite[Lemma 3.6]{hvz}
Let $J\subset I$ be monomial ideals of $S=K[x_1,\ldots,x_n]$, and let $T=S[x_{n+1}]$. Then $\sdepth(IT/JT)=\sdepth(I/J)+1$.
\end{lema}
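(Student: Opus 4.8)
The plan is to establish the two inequalities $\sdepth(IT/JT)\geq\sdepth(I/J)+1$ and $\sdepth(IT/JT)\leq\sdepth(I/J)+1$ separately, in both cases by directly transporting Stanley decompositions across the inclusion $S\subset T$. I will use the elementary facts that $IT\cap S=I$, $JT\cap S=J$, and that a monomial $w=w'x_{n+1}^{a}$ with $w'\in S$ lies in $IT\setminus JT$ precisely when $w'\in I\setminus J$; equivalently, $IT/JT\cong (I/J)\otimes_{K}K[x_{n+1}]$ as $\mathbb{Z}^{n+1}$-graded modules. The case $I=J$ is trivial, so assume $J\subsetneq I$.

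For the first inequality I would take a Stanley decomposition $\mathcal D\colon I/J=\bigoplus_{i=1}^{r}u_iK[Z_i]$ with $\sdepth(\mathcal D)=\sdepth(I/J)$ and set $\mathcal D'\colon IT/JT=\bigoplus_{i=1}^{r}u_iK[Z_i\cup\{x_{n+1}\}]$. I would then check that $\mathcal D'$ is a Stanley decomposition: a monomial $w'x_{n+1}^{a}\in IT\setminus JT$ has $w'\in I\setminus J$, hence $w'=u_iv$ for a unique $i$ and a unique $v\in K[Z_i]$, so $w'x_{n+1}^{a}=u_i(vx_{n+1}^{a})$ with $vx_{n+1}^{a}\in K[Z_i\cup\{x_{n+1}\}]$, and this representation is unique; conversely every such product lies in $IT\setminus JT$. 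Since $|Z_i\cup\{x_{n+1}\}|=|Z_i|+1$, this gives $\sdepth(IT/JT)\geq\sdepth(\mathcal D')=\sdepth(I/J)+1$.

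For the reverse inequality I would take a Stanley decomposition $\mathcal D\colon IT/JT=\bigoplus_{i=1}^{r}u_iK[Z_i]$ realizing $\sdepth(IT/JT)$ and restrict it to $S$: discard the indices with $x_{n+1}\mid u_i$ and, for the remaining ones, replace $Z_i$ by $Z_i':=Z_i\setminus\{x_{n+1}\}$. The crucial step is to verify that $\mathcal D_0\colon I/J=\bigoplus_{i\,:\,x_{n+1}\nmid u_i}u_iK[Z_i']$ is again a Stanley decomposition. The inclusion $u_iK[Z_i']\subseteq I/J$ holds because $u_iv\in IT\setminus JT$ together with $u_iv\in S$ forces $u_iv\in I\setminus J$. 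For exhaustiveness and disjointness: any monomial $w\in I\setminus J$ is $x_{n+1}$-free and lies in a unique piece $u_iK[Z_i]$ of $\mathcal D$; comparing $x_{n+1}$-degrees forces $x_{n+1}\nmid u_i$ and $x_{n+1}\nmid(w/u_i)$, so $w\in u_iK[Z_i']$ with $i$ retained and uniqueness inherited from $\mathcal D$, while the discarded pieces consist only of monomials divisible by $x_{n+1}$ and so contain nothing of $I/J$. Finally $|Z_i'|\geq|Z_i|-1\geq\sdepth(IT/JT)-1$ for every retained $i$, whence $\sdepth(I/J)\geq\sdepth(\mathcal D_0)\geq\sdepth(IT/JT)-1$; combined with the first inequality this is the assertion.

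The argument is essentially bookkeeping, and the one point I would be most careful about is the reverse inequality: a Stanley decomposition of $IT/JT$ need not be homogeneous with respect to the $x_{n+1}$-grading, so one cannot simply read off a decomposition of $I/J$ from it; the step above is exactly the observation that the $x_{n+1}$-degree-zero slice of $\mathcal D$ is automatically a disjoint union of the restricted Stanley spaces $u_iK[Z_i']$. Alternatively, one can invoke the characteristic-poset description of Stanley depth from \cite{hvz}: the poset attached to $IT/JT$ is the product of the one attached to $I/J$ with a one-element chain in the new coordinate, on which every element is maximal, so interval partitions correspond bijectively and the value of the invariant increases by exactly one.
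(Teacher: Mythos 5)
Your proof is correct. Note that the paper itself gives no argument for this statement: it is quoted verbatim from Herzog--Vladoiu--Zheng \cite[Lemma 3.6]{hvz}, so there is nothing internal to compare against. Your two-sided decomposition-transport argument is sound and self-contained: the key reductions ($w'x_{n+1}^a\in IT\setminus JT$ iff $w'\in I\setminus J$, and the fact that the $x_{n+1}$-degree-zero slice of an arbitrary Stanley decomposition of $IT/JT$ is automatically the disjoint union of the restricted spaces $u_iK[Z_i\setminus\{x_{n+1}\}]$ over the indices with $x_{n+1}\nmid u_i$) are exactly the points that need checking, and you check them. The closing remark via the characteristic poset is essentially the route taken in \cite{hvz}; either argument suffices, and the direct one has the advantage of not requiring the poset machinery.
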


For any monomial ideal $J\subset S$, we denote $J^c$ the $K$-vector space spanned by all the monomials not contained in $J$.
With this notation, we have the following lemma.

\begin{lema}
Let $I\subset S=K[x_1,\ldots,x_n ]$ be a monomial ideal and $u\in S$ a monomial. Then
\[ I = ((u)^c\cap I)\oplus u(I:u).\]
Also, $I^c = (v)^c \oplus v(I')^c$, where $v=GCD(u|\;u\in G(I))$ and $I'=(I:v)$.
\end{lema}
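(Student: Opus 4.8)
The plan is to prove both identities by splitting the monomial $K$-basis of the relevant vector space according to divisibility, since a monomial ideal $J$ and its complement $J^c$ are each spanned over $K$ by the monomials they contain, so it suffices to check the decompositions on monomials.

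For the first identity, I would partition the monomials of $I$ into those divisible by $u$ and those not. A monomial $w\in I$ with $u\nmid w$ lies in $(u)^c\cap I$. A monomial $w\in I$ with $u\mid w$ factors uniquely as $w=uw'$ with $w'=w/u$ a monomial, and $w\in I$ forces $w'\in (I:u)$, so $w\in u(I:u)$. Conversely, every monomial in $(u)^c\cap I$ and every monomial in $u(I:u)$ lies in $I$, and these two sets of monomials are disjoint because multiplication by $u$ produces monomials divisible by $u$. Hence $I=((u)^c\cap I)\oplus u(I:u)$ as $K$-vector spaces.

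For the second identity, the extra input is the factorization $I=vI'$, which shows every monomial of $I$ is divisible by $v$, i.e. $(v)^c\subseteq I^c$. I would again split the monomials not in $I$ (which span $I^c$) by divisibility by $v$: if $v\nmid w$ then $w\in (v)^c$; if $v\mid w$, write $w=vw'$, and since injectivity of multiplication by $v$ on monomials gives $w\in I=vI'\iff w'\in I'$, the hypothesis $w\notin I$ yields $w'\in (I')^c$, i.e. $w\in v(I')^c$. Disjointness is again immediate, so $I^c=(v)^c\oplus v(I')^c$. Alternatively, one can deduce this from the first identity applied with $u=v$ together with the decompositions $S=(v)^c\oplus vS$ and $S=I\oplus I^c$, using $(v)^c\cap I^c=(v)^c$.

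I do not anticipate a genuine obstacle: both statements reduce to bookkeeping on monomial bases. The only points requiring care are that $J^c$ really is spanned by monomials (so that verifying the direct sum on monomials is enough) and the equivalence $w\in I\iff w/v\in I'$ for $v\mid w$, which rests precisely on $I=vI'$.
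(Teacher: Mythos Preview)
Your proof is correct and follows essentially the same route as the paper: the paper phrases it as intersecting $I$ (respectively $I^c$) with the decomposition $S=(u)^c\oplus(u)$ and then identifying $(u)\cap I=u(I:u)$ and $I^c\cap(v)=v(I')^c$, which is exactly your monomial-by-monomial partition according to divisibility by $u$ (respectively $v$).
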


\begin{proof}
We have $I = I \cap S = I \cap ((u)^c\oplus (u))\cap I = (u)^c\cap I)\oplus ((u)\cap I)$. In order to complete the proof, it is enough to show that $((u)\cap I)=u(I:u)$. Indeed, if $v\in (u)\cap I$ is a monomial, then $v=uw$ for some monomial $w\in S$. Moreover, since
$uw=v\in I$ it follows that $w\in (I:u)$ and thus $v\in u(I:u)$. The inclusion $((u)\cap I) \supseteq u(I:u)$ is similar.

We have $I^c = I^c \cap S = I^c \cap ((v)^c\oplus (v)) = (I^c\cap(v)^c)\oplus (I^c\cap (v))= (v)^c \oplus v(I')^c$.
\end{proof}

\begin{prop}
Let $I\subset S$ be a monomial ideal which is not principal. Then:

(1) $\sdepth(S/I)=\sdepth(S/I')$.

(2)	$\sdepth(I)=\sdepth(I')$.	

(3)	$\sdepth(S/I)\geq n-c(I)$.

(4)	$\sdepth(I)\geq n-c(I)+1$.

(5) $\sdepth(S/I)\geq n-g(I)$.

(6)	$\sdepth(I)\geq \max\{1,n-g(I)+1\}$.
\end{prop}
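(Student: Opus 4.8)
The plan is to reduce everything to the setup of Lemma 1.2 and then invoke the Herzog–Vladoiu–Zheng formula in Lemma 1.1 to strip off variables that do not occur in the relevant ideal. First I would observe that since $I = vI'$ with $v = GCD(u \mid u \in G(I))$, the map $w \mapsto vw$ is a degree-shifting isomorphism of $\mathbb Z^n$-graded $S$-modules $I' \to I$, so $\sdepth(I) = \sdepth(I')$; this gives (2). For (1), I would use the second decomposition in Lemma 1.2, namely $I^c = (v)^c \oplus v(I')^c$ as $K$-vector spaces, translated into the language of $S/I$ versus $S/I'$: multiplication by $v$ together with the complementary monomials of $(v)$ exhibits $S/I$ and $S/I'$ as having the same Stanley depth (the $(v)^c$ piece is a free summand over the full variable set, which never lowers the $\sdepth$), so $\sdepth(S/I) = \sdepth(S/I')$. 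These two parts are essentially bookkeeping with Lemma 1.2.

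Next, for (3) and (4), I would exploit that $\operatorname{supp}(I') \subseteq \{x_1,\dots,x_n\}$ has cardinality $c(I)$, so after renaming variables we may assume $I'$ is extended from $I'' \subset S'' = K[x_1,\dots,x_{c(I)}]$, i.e. $I' = I'' S$. Applying Lemma 1.1 repeatedly (once for each of the $n - c(I)$ variables not in $\operatorname{supp}(I')$) yields $\sdepth(S/I') = \sdepth(S''/I'') + (n - c(I))$ and $\sdepth(I') = \sdepth(I'') + (n - c(I))$. Since any Stanley decomposition over $S''$ has $\sdepth \ge 0$, and over $I''$ (a nonzero ideal, since $I$ is not principal so $I' \neq S$ and $I' \neq 0$) has $\sdepth \ge 1$ — a single monomial spans a $u_iK[\emptyset]$ piece only in degenerate cases, and one can always find a decomposition of a nonzero monomial ideal with every $Z_i$ nonempty — we get $\sdepth(S/I) \ge n - c(I)$ and $\sdepth(I) \ge n - c(I) + 1$, using (1) and (2) to transfer back. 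The mild subtlety here is justifying $\sdepth(I'') \ge 1$, which should follow because $\operatorname{supp}(I'') = \{x_1,\dots,x_{c(I)}\}$ with $c(I) \ge 1$ forces at least one variable to be available in each piece of a suitably chosen decomposition.

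Finally, for (5) and (6), I would compare $g(I)$ with $c(I)$: since each generator of $I'$ contributes at least one variable to $\operatorname{supp}(I')$, we trivially have $c(I) \le g(I)$, hence $n - c(I) \ge n - g(I)$ and the bounds (3), (4) immediately give (5) and the bound $n - g(I) + 1$ in (6). The remaining claim $\sdepth(I) \ge 1$ always holds because $I \neq 0$ admits a Stanley decomposition in which every $Z_i$ can be taken to contain $\operatorname{supp}(I) \neq \emptyset$ — concretely, each $u_i K[Z_i]$ can absorb one variable dividing the relevant generator. I expect the main obstacle to be the clean transfer in (1): one must argue carefully that gluing a Stanley decomposition of $v(I')^c$ (obtained by shifting one of $S/I'$) with the fixed decomposition $(v)^c = \bigoplus (\text{monomial})K[\emptyset\text{-ish pieces})$ actually realizes $\sdepth(S/I')$ and not something smaller, i.e. that the complementary piece $(v)^c$ can always be decomposed with parts of dimension at least $\sdepth(S/I')$; this is where I would need to be most precise, likely by writing $(v)^c$ itself with an explicit decomposition whose pieces use all $n$ variables except finitely many, matching or exceeding the target depth.
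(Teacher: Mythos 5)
Your parts (2), (3) and (4) follow the paper's route (reduce to $I'$ via multiplication by $v$, then strip the variables outside $\operatorname{supp}(I')$ with Lemma 1.1), and are fine modulo the standard fact that a nonzero monomial ideal has $\sdepth\geq 1$, which the paper takes from \cite[Proposition 3.4]{hvz}. But there are two genuine gaps. First, in (1) you only establish $\sdepth(S/I)\geq\sdepth(S/I')$: the splitting $I^c=(v)^c\oplus v(I')^c$ lets you assemble a decomposition of $S/I$ from one of $S/I'$, and the ``obstacle'' you flag at the end (decomposing $(v)^c$ with large enough pieces) belongs to this easy direction, since $(v)^c$ admits a decomposition of sdepth $n-1\geq \sdepth(S/I')$. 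The real work is the converse inequality $\sdepth(S/I')\geq\sdepth(S/I)$: an arbitrary Stanley decomposition $S/I=\bigoplus_i u_iK[Z_i]$ need not be compatible with that splitting, and one must show that each nonzero intersection $u_iK[Z_i]\cap(v)$ is again a Stanley space, namely $LCM(u_i,v)K[Z_i]$, before dividing out by $v$. Your proposal never addresses this direction at all.

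Second, your derivation of (5) and (6) from (3) and (4) rests on the inequality $c(I)\leq g(I)$, which is false: a single generator of $I'$ can contribute many variables to $\operatorname{supp}(I')$, not just one. For $I=(x_1,x_2x_3)\subset K[x_1,x_2,x_3]$ one has $g(I)=2$ but $c(I)=3$, so (3) gives only $\sdepth(S/I)\geq 0$ while (5) asserts $\sdepth(S/I)\geq 1$ (which is correct, by Proposition 1.4). Hence (5) is genuinely stronger than (3) and needs its own argument; the paper proves it by induction on $n$ using the Janet-type decomposition $S/I=\bigoplus_{j<q}x_n^j(S'/I_j)\oplus x_n^q(S'/I_q)[x_n]$, the point being that $g(I_j)<g(I)$ for $j<q$. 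Likewise the bound $n-g(I)+1$ in (6) does not follow from (4); it is exactly \cite[Proposition 3.4]{hvz}.
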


\begin{proof}
(1) By Lemma $1.2$, $S/I = I^c=(v)^c \oplus vI'^c$, where $v=GCD(u|\;u\in G(I))$ and $I'=(I:v)$. Given a Stanley decomposition $S/I'=\bigoplus_{i=1}^r u'_iK[Z_i]$, then $\bigoplus_{i=1}^r vu'_iK[Z_i]$ is a Stanley decomposition of $v(I'^c)$. On the other hand, one can easily give a Stanley decomposition $\mathcal D$ of $(v)^c$ with $sdepth(\mathcal D)=n-1$. Thus, we obtain a Stanley decomposition of $S/I$ with its Stanley depth $\geq \sdepth(S/I')$. It follows that $\sdepth(S/I)\geq \sdepth(S/I')$.

In order to prove the converse inequality, we give $S/I=\bigoplus_{i=1}^r u_i K[Z_i]$ a Stanley decomposition of $S/I$. It follows that $v(I'^c) = \bigoplus_{i=1}^r (u_i K[Z_i]\cap v(I'^c)) = \bigoplus_{i=1}^r (u_i K[Z_i]\cap (v))$. We claim that $u_i K[Z_i]\cap (v)\neq (0)$ implies $LCM(u_i,v)\in u_iK[Z_i]$. 

Indeed, if $LCM(u_i,v)\notin u_i K[Z_i]$ it follows that $v/GCD(u_i,v)\notin K[Z_i]$ and therefore, there exists $x_j| v/GCD(u_i,v)$ such that $x_j\notin Z_i$. Thus, $v$ cannot divide any monomial of the form $u_iy$, where $y\in K[Z_i]$ and therefore $u_i K[Z_i]\cap (v)=(0)$, a contradiction. We have proved that $LCM(u_i,v)\in u_i K[Z_i]$. This implies $LCM(u_i,v)K[Z_i]\subset u_iK[Z_i]$. Obviously, $LCM(u_i,v)K[Z_i]\subset (v)$ and thus $LCM(u_i,v)K[Z_i]\subset u_i K[Z_i]\cap (v)$.

\pagebreak

On the other hand, if $u\in u_i K[Z_i]\cap (v)$ is a monomial, it follows that $u_i|u$ and $v|u$ and therefore, $LCM(u_i,v)|u$. 
Since $u\in u_i K[Z_i]$, it follows that $u=u_i\cdot w_i$, where $supp(w_i)\subset Z_i$. Moreover, $supp(u/LCM(u_i,v))\subset Z_i$ and thus, $u\in LCM(u_i,v)K[Z_i]$. We obtain $LCM(u_i,v)K[Z_i] = u_i K[Z_i]\cap (v)$.
In conclusion, \[ vI'^c = \bigoplus_{(v)\cap u_iK[Z_i]\neq 0}LCM(u_i,v)K[Z_i]\;so\; I'^c = \bigoplus_{(v)\cap u_iK[Z_i]\neq 0}\frac{u_i}{GCD(u_i,v)}K[Z_i]. \]
It follows that $\sdepth(S/I')\geq \sdepth(S/I)$, as required.

(2) Suppose $I=\bigoplus_{i=1}^r u_iK[Z_i]$ is a Stanley decomposition for $I$. One can easily show that $I'=\bigoplus_{i=1}^r u_i/v K[Z_i]$ is a Stanley decomposition for $I'$, and thus $\sdepth(I)\leq sdepth(I')$. Conversely, if $I'=\bigoplus_{i=1}^r \bar{u}_i K[Z_i]$ is a Stanley decomposition of $I'$ it follows that $I=\bigoplus_{i=1}^r \bar{u}_iv K[Z_i]$ is a Stanley decomposition of $I$.

(3) By $(1)$, we can assume that $I'=I$. By reordering the variables, we can assume that $I\subset (x_1,x_2,\ldots,x_m)$, where $m=c(I)$. We write $I=(I\cap K[x_1,\ldots,x_m])S$. Lemma $1.1$ implies $\sdepth(S/I) = \sdepth(K[x_1,\ldots,x_m]/(I\cap K[x_1,\ldots,x_m])) + n-m\geq n-m$. 

(4) The proof is similar with $(3)$.

(5) In order to prove, we use a strategy similar with the Janet's algorithm, see \cite{imran}. As in the proof of \cite[Proposition 3.4]{hvz}, we use induction on $n\geq 1$. If $n=1$ there is nothing to prove. If $m=1$, $I$ is principal and thus $\sdepth(S/I)=n-1$. Suppose $n>1$ and $m>1$. Let $q=deg_{x_n}(I):=\max\{j:\; x_n^j|u$ for some $u\in G(I)\}$. For all $j\leq q$, we denote $I_j$ the monomial ideal in $S'=K[x_1,\ldots,x_{n-1}]$ such that $I\cap x_n^jS'=x_n^j I_j$. Note that $g(I_j)<g(I)$ for all $j<q$ and $g(I_q)\leq g(I)$. We have
\[ S/I = S'/I_0 \oplus x_n(S'/I_1) \oplus \cdots \oplus x_n^{q-1}(S'/I_{q-1})\oplus x_n^q (S'/I_q)[x_n].\]
It follows that $\sdepth(S/I)\geq \min\{\sdepth(S'/I_j),j<q, \sdepth(S'/I_q)+1\}$. By induction hypothesis, it follows that
$\sdepth(S'/I_j)\geq n-1-q(I_j) \geq n-1-(m-1)=n-m$ for all $j<q$. Also, $\sdepth(S'/I_q)\geq n-1-g(I_q)\geq n-1-m$. This complete the proof.

(6) See \cite[Proposition 3.4]{hvz}.
\end{proof}

\begin{prop}
Let $I\subset S$ be a monomial ideal which is not principal with $c(I)=2$ or $g(I)=2$. Then $\sdepth(I)=n-1$ and $\sdepth(S/I)=n-2$.
\end{prop}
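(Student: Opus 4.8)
The plan is to establish the four inequalities $\sdepth(I)\ge n-1$, $\sdepth(I)\le n-1$, $\sdepth(S/I)\ge n-2$, $\sdepth(S/I)\le n-2$, deducing the two lower bounds from Proposition $1.3$ and the two upper bounds from two general facts: that $\sdepth(S/I)\le\dim(S/I)$, and that a non-principal ideal is not a free $S$-module. First I would pass, via Proposition $1.3$ $(1)$ and $(2)$, to the case $v:=GCD(u\mid u\in G(I))=1$, i.e.\ $I=I'$. This is harmless: $g(I')=g(I)$, $c(I')=|supp(I')|=c(I)$, $I'$ is still non-principal, $I'\ne S$ (since $c(I)=2$ or $g(I)=2$ forces $1\notin I$), and $n\ge2$ because every monomial ideal in one variable is principal. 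With $GCD(u\mid u\in G(I))=1$ the ideal $I$ is contained in no $(x_i)$, so each minimal prime of $I$ is a monomial prime, necessarily on at least two variables (recall $I\ne0$); hence $\mathrm{ht}(I)\ge2$ and $\dim(S/I)=n-\mathrm{ht}(I)\le n-2$.

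For the lower bounds: if $c(I)=2$, then $\sdepth(I)\ge n-c(I)+1=n-1$ by Proposition $1.3$ $(4)$ and $\sdepth(S/I)\ge n-c(I)=n-2$ by Proposition $1.3$ $(3)$; if $g(I)=2$, then $\sdepth(I)\ge\max\{1,n-g(I)+1\}=n-1$ by Proposition $1.3$ $(6)$ (using $n\ge2$) and $\sdepth(S/I)\ge n-g(I)=n-2$ by Proposition $1.3$ $(5)$.

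For the upper bounds: in any Stanley decomposition $S/I=\bigoplus u_iK[Z_i]$ each summand $u_iK[Z_i]$ is isomorphic to $K[Z_i]$ as a $K[Z_i]$-module and sits inside $S/I$, so $|Z_i|=\dim_{K[Z_i]}(u_iK[Z_i])\le\dim_{K[Z_i]}(S/I)\le\dim(S/I)$; hence $\sdepth(S/I)\le\dim(S/I)\le n-2$ by the first paragraph. For $I$ itself, $\sdepth(I)\le n$ trivially, and if $\sdepth(I)=n$ then some Stanley decomposition has all $Z_i=\{x_1,\dots,x_n\}$, whence $I=\bigoplus_{i=1}^r u_iS$ is a free $S$-module; but a nonzero free ideal of the domain $S$ has rank one and is therefore principal, contradicting $g(I)\ge2$. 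This gives $\sdepth(I)=n-1$ and $\sdepth(S/I)=n-2$.

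I do not anticipate a serious obstacle: once Proposition $1.3$ and the two general facts are available, the argument is short. The step that most needs care is the reduction to $I=I'$ — one must verify that the hypotheses $c(I)=2$, $g(I)=2$, non-principality and $n\ge2$, as well as both equalities to be proved, are unaffected (Proposition $1.3$ $(1)$, $(2)$) — together with the observation that $GCD(u\mid u\in G(I))=1$ already forces $\dim(S/I)\le n-2$. Should one wish to avoid invoking $\sdepth(S/I)\le\dim(S/I)$, the same upper bound can be obtained directly: after using Lemma $1.1$ to remove the variables outside $supp(I)$, one exhibits a nonzero class of the quotient annihilated by a product of variables (for two generators $u_1,u_2$, the class of $\frac{u_1}{\prod_{x_i\mid u_1}x_i}\cdot\frac{u_2}{\prod_{x_j\mid u_2}x_j}$, killed by $\prod_{x_i\mid u_1}x_i$ and by $\prod_{x_j\mid u_2}x_j$; when $c(I)=2$ a socle element of a non-principal ideal in two variables) and checks that no Stanley decomposition with all $|Z_i|\ge n-1$ can accommodate such a class.
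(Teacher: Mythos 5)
Your proof is correct, and it reaches the conclusion by a genuinely different route for the upper bound on $\sdepth(S/I)$. The lower bounds are identical to the paper's (Proposition $1.3$, parts $(3)$--$(6)$), and your treatment of $\sdepth(I)\le n-1$ merely fills in the detail behind the paper's one-line assertion that $\sdepth(I)=n$ would force $I$ principal; your rank-one/free-module justification is fine (two nonzero ideals of a domain always meet, so $r=1$). The real divergence is in proving $\sdepth(S/I)\le n-2$: the paper argues by cases, exhibiting for $c(I)=2$ an explicit witness monomial $w=x_1^{a-1}$ with $w\notin I$ but $x_1w\in I$ and $x_2^kw\in I$ for $k\gg 0$, which must lie in a Stanley space of dimension $\le n-2$, and for $g(I)=2$ reducing to a complete intersection and citing \cite{hsy} or \cite{asia}. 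You instead combine the inequality $\sdepth(S/I)\le\dim(S/I)$ with the observation that $GCD(u\mid u\in G(I'))=1$ forces every minimal prime of $I'$ to have height at least $2$. This is more uniform (one argument covers both hypotheses) and in fact proves the stronger statement that $\sdepth(S/I)\le n-2$ for \emph{every} non-principal monomial ideal; the price is an appeal to a fact not established in the paper, though it is standard and due to Apel \cite{apel}. Two small points: your justification of $\dim_{K[Z_i]}(S/I)\le\dim(S/I)$ is the least rigorous step as written --- it is cleaner to note that $u_iK[Z_i]\cap I=0$ forces $(I:u_i)\subseteq(x_j:\,x_j\notin Z_i)$, whence $|Z_i|\le\dim S/(I:u_i)\le\dim S/I$; and $I'\neq S$ follows from $g(I')=g(I)\ge 2$ rather than from ``$1\notin I$''. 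Neither affects correctness.
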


\begin{proof}
If $c(I)=2$, then, by $1.3(4)$, it follows that $\sdepth(I)\geq n-c(I)+1=n-1$. Similarly, $\sdepth(I)\geq n-1$ if $g(I)=2$. But $\sdepth(I)<n$, otherwise, $I$ is principal. 

Also, by $1.3$, $\sdepth(S/I)\geq n-2$ if $c(I)=2$ or $g(I)=2$. We consider the case $c(I)=2$. By $1.3(2)$, we can assume that $I=I'$ and $supp(I)=\{x_1,x_2\}$. Since $GCD(u|\;u\in G(I))=1$, we can assume that $x_1^a\in G(I)$ for some positive integer $a$. Let $w=x_1^{a-1}$. Obviously,
$w\notin I$, but $x_1w\in I$ and $x_2^kw\in I$ for $k\gg 0$. It follows that $w$ is contained in a Staley space of $S/I$ with dimension $\leq n-2$ and thus $\sdepth(S/I)=n-2$.

We consider now the case $g(I)=2$. Suppose $I=(u_1,u_2)$. By $1.3(3)$, we can assume $GCD(u_1,u_2)=1$. Therefore, $I$ is a complete intersection and by \cite[Proposition 1.2]{hsy} or \cite[Corollary 1.4]{asia}, it follows that $\sdepth(S/I)=n-2$.
\end{proof}

\begin{teor}
Let $I\subset S$ be a monomial ideal. Then $\sdepth(S/I)=0$ if and only if $I\neq I^{sat}$, where $I^{sat}=\bigcup_{k\geq 1}(I:(x_1,\ldots,x_n)^k)$ is the \emph{saturation} of $I$.
\end{teor}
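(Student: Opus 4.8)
I would establish the two implications separately.

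\emph{Suppose first that $I\neq I^{sat}$.} Pick a monomial $w\in I^{sat}\setminus I$ which is maximal with respect to divisibility among such monomials (one exists: if $\mathbf m^k w\subseteq I$ then divisibility chains inside $I^{sat}\setminus I$ starting at $w$ are bounded). Since $I^{sat}$ is an ideal, maximality of $w$ forces $x_iw\in I$ for every $i$, while $w\notin I$. Now let $S/I=\bigoplus_{l}u_lK[Z_l]$ be an arbitrary Stanley decomposition; the nonzero class of $w$ lies in one summand, say $w=u_l\mu$ with $\mu\in K[Z_l]$, and since $u_lK[Z_l]\subseteq S/I$ we have $u_l\mu'\notin I$ for every monomial $\mu'\in K[Z_l]$. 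If some $x_i\in Z_l$, then $x_iw=u_l(x_i\mu)\notin I$, a contradiction; hence $Z_l=\emptyset$ and this decomposition has Stanley depth $0$. As the decomposition was arbitrary, $\sdepth(S/I)=0$.

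\emph{Now suppose $I=I^{sat}$; I claim $\sdepth(S/I)\geq 1$, by induction on $n$.} The case $n=1$ is clear (a proper saturated ideal of $K[x_1]$ is $(0)$). Let $n\geq 2$, write $S=S'[x_n]$ with $S'=K[x_1,\ldots,x_{n-1}]$, put $\mathbf m'=(x_1,\ldots,x_{n-1})$, and use the decomposition $S/I=\bigoplus_{j\geq 0}x_n^j(S'/I_j)$ (as in the proof of Proposition $1.3$), where $I_j=\{w\in S':x_n^jw\in I\}$ is an increasing, eventually constant chain of monomial ideals of $S'$. The point is that although $I$ is saturated in $S$, the $I_j$ need not be saturated in $S'$; so I pass to the saturations $I_j^{*}$ of $I_j$ inside $S'$, which again form an increasing chain. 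For each $j$ the monomials outside $I_j$ split into those lying in $I_j^{*}\setminus I_j$ and those outside $I_j^{*}$, giving a splitting of $\mathbb Z^{n-1}$-graded $K$-vector spaces $S'/I_j=(I_j^{*}/I_j)\oplus(S'/I_j^{*})$. Hence there is a monomial ideal $\widetilde I\supseteq I$ with $\widetilde I\cap x_n^jS'=x_n^jI_j^{*}$, and as $K$-vector spaces $S/I=\mathcal B\oplus(S/\widetilde I)$, where $\mathcal B=\bigoplus_j x_n^j(I_j^{*}/I_j)$ and $S/\widetilde I=\bigoplus_j x_n^j(S'/I_j^{*})$.

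For $S/\widetilde I$: each $I_j^{*}$ is saturated in $S'$, so by the inductive hypothesis $\sdepth_{S'}(S'/I_j^{*})\geq 1$ whenever $I_j^{*}$ is proper, and since $(I_j^{*})$ is eventually constant the decomposition ends with a block $x_n^{q}(S'/I_q^{*})[x_n]$ of Stanley depth $\geq 1$ by Lemma $1.1$; assembling these gives $\sdepth_S(S/\widetilde I)\geq 1$. For $\mathcal B$: this is where $I=I^{sat}$ enters. For a monomial $w\in S'$ let $j_0(w)$ be the least $j$ with $w\in I_j^{*}\setminus I_j$ (if any); I claim that then $w\in I_j^{*}\setminus I_j$ for all $j\geq j_0(w)$. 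Membership in $I_j^{*}$ is immediate from the chain; and if $x_n^jw\in I$ for some $j\geq j_0(w)$, take the least such $j_1$, so $x_n^{j_1-1}w\notin I$ but $x_n^{j_1}w\in I$; combining $x_n^{j_1}w\in I$ with the relation $(\mathbf m')^k x_n^{j_1-1}w\subseteq I$ coming from $w\in I_{j_0(w)}^{*}$ shows $\mathbf m^{k}x_n^{j_1-1}w\subseteq I$, so $x_n^{j_1-1}w\in I^{sat}\setminus I$, contradicting $I=I^{sat}$. Therefore $\mathcal B=\bigoplus_w x_n^{j_0(w)}w\,K[x_n]$, a Stanley decomposition with every free set equal to $\{x_n\}$, so $\sdepth_S(\mathcal B)\geq 1$. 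Adding the decompositions of $\mathcal B$ and $S/\widetilde I$ gives $\sdepth(S/I)\geq 1$, finishing the induction.

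The main obstacle is the claim about $\mathcal B$: it is the only point using saturation of $I$, and it rests on the observation that the ``$\mathbf m'$-torsion at level $j$'' is monotone in $j$ — once $x_n^{j_0}w$ is pushed into $I$ by a power of $\mathbf m'$ while still not lying in $I$, no later power $x_n^jw$ can fall into $I$. One must also take care to use the \emph{full} saturation $I_j^{*}$, not merely $(I_j:\mathbf m')$, so that the inductive hypothesis applies to $S'/I_j^{*}$; everything else is routine bookkeeping with $S/I=\bigoplus x_n^j(S'/I_j)$ and Lemma $1.1$.
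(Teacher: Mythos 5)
Your proof is correct and follows essentially the same route as the paper: the forward implication is the paper's argument (sharpened by choosing a divisibility-maximal $w\in I^{sat}\setminus I$ so that $x_iw\in I$ for all $i$), and the converse uses the same $x_n$-slicing $S/I=\bigoplus_j x_n^j(S'/I_j)$, induction on $n$ applied to the saturations $I_j^{*}$, and the same key monotonicity of the torsion sets $I_j^{*}\setminus I_j$, which you pack into $K[x_n]$-spaces exactly as the paper packs its sets $A_j$. The only difference is organizational: you prove the contrapositive directly from $I=I^{sat}$ (deriving the monotonicity from saturatedness), whereas the paper reaches the identical decomposition through a case split ending in a proof by contradiction.
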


\begin{proof}
Suppose $I\neq I^{sat}$ and take $u\in I^{sat}\setminus I$ a monomial. Let $S/I=\bigoplus_{i=1}^r u_iK[Z_i]$ be a Stanley decomposition of $S/I$. Since $u\notin I$ it follows that $u\in u_i K[Z_i]$ for some $i\in [r]$. If $x_j\in Z_i$, then $x_j^k u\in u_iK[Z_i]\subset I^c$ for any $k>0$, a contradiction. Therefore, $u=u_i$ and $Z=\emptyset$ and thus $\sdepth(S/I)=0$.

In order to prove the converse, we use induction on $n\geq 1$. The case $n=1$ is trivial. Suppose $n>1$. We use the decomposition of $S/I$ given by the Janet's algorithm, see \cite{imran}. Let $q=deg_{x_n}(I):=\max\{j:\; x_n^j|u$ for some $u\in G(I)\}$. For all $j\leq q$, we denote $I_j$ the monomial ideal in $S'=K[x_1,\ldots,x_{n-1}]$ such that $I\cap x_n^jS'=x_n^j I_j$. With these notation, we have:
\[(*)\; S/I = S'/I_0 \oplus x_n(S'/I_1) \oplus \cdots \oplus x_n^{q-1}(S'/I_{q-1})\oplus x_n^q (S'/I_q)[x_n].\]
Since $\sdepth(S/I)=0$, it follows that $\sdepth(S'/I_j)=0$ for some $j<q$, otherwise, from the above decomposition it will follow that $\sdepth(S/I)>0$, a contradiction. By induction hypothesis, there exists a monomial $u\in I_j^{sat}\setminus I_j$. We consider the monomial $w=x_n^ju$. 
Since $u\in I_j^{sat}\setminus I_j$ it follows that $w\notin I$ and $x_j^k w \in I$ for $k\gg 0$ and $j<n$. If $x_n^k u\in I$ for some $k\gg 0$ we are done.

Now, suppose that for any $0\leq j\leq q$ and for any monomial $u\in I_j^{sat}\setminus I_j$ and for any positive integer $k$ it follows that $x_n^k u\notin I$. For any $0\leq j\leq q$, we denote $A_j$ the set of monomials which are in $I_j^{sat} \setminus I_j$. By our assumption, we have $A_0\subseteq A_1\subseteq \cdots \subseteq A_q$. Indeed, if $u\in A_j$ for some $j<q$, then $x_n^{j+1}u\notin I$ and so $u\notin I_{j+1}$. On the other hand, $I_j\subset I_{j+1}$ and thus $u\in I_{j+1}^{sat}$. Therefore $u\in A_{j+1}$ and thus $A_j\subset A_{j+1}$.

Since $(I_j^{sat})^{sat}=I_j^{sat}$, by induction hypothesis, it follows that $\sdepth(S'/I_j^{sat})\geq 1$. Suppose $\mathcal D_j: S'/I_j^{sat} = \bigoplus_{i=1}^{r_j}u_{ij}K[Z_{ij}]$ is a Stanley decomposition with $\sdepth(\mathcal D_j)\geq 1$. It follows that $S'/I_j = \bigoplus_{i=1}^{r_j}u_{ij}K[Z_{ij}] \oplus \bigoplus_{u\in A_j}uK$ is a Stanley decomposition of $S'/I_j$. By $(*)$, it follows that
\[ S/I = \bigoplus_{j=0}^{q-1}(\bigoplus_{i=1}^{r_j}x_n^j u_{ij}K[Z_{ij}]\oplus \bigoplus_{u\in A_j}x_n^j uK) \oplus \bigoplus_{i=1}^{r_q}(x_n^q u_{iq}K[Z_{iq},x_n]\oplus \bigoplus_{u\in A_q}x_n^q uK[x_n]).\]
Note that $A_0\subseteq A_1\subseteq \cdots \subseteq A_q$ implies \[ \bigoplus_{j=0}^{q-1}( \bigoplus_{u\in A_j}x_n^j uK) \oplus  \bigoplus_{u\in A_q}x_n^q uK[x_n] = \bigoplus_{j=0}^q \bigoplus_{u\in A_j\setminus A_{j-1}}x_n^j uK[x_n],\] where $A_{-1}=\emptyset$, and thus
\[ S/I = \bigoplus_{j=0}^{q-1}\bigoplus_{i=1}^{r_j}x_n^j u_{ij}K[Z_{ij}]\oplus \bigoplus_{i=1}^{r_q} x_n^q u_{iq}K[Z_{iq},x_n] \oplus \bigoplus_{j=0}^q \bigoplus_{u\in A_j\setminus A_{j-1}}x_n^j uK[x_n], \]
is a Stanley decomposition of $S/I$ and therefore, $\sdepth(S/I)>0$, a contradiction.
\end{proof}

\begin{cor}
Let $I\subset S$ be a monomial ideal. Then $\sdepth(S/I)=0$ if and only if $\depth(S/I)=0$. In particular, $\sdepth(S/I)=0$ if and only if
$\sdepth(S/I^k)=0$, where $k\geq 1$.
\end{cor}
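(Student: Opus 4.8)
The plan is to obtain both statements as consequences of Theorem $1.5$ together with a standard fact about saturation. For a monomial ideal $J\subset S$ I would first check that $J^{sat}=J$ is equivalent to $\depth(S/J)>0$. Indeed $J\subseteq(J:\me)\subseteq J^{sat}$ always, and if $(J:\me)=J$ then $(J:\me^{t})=J$ for all $t\ge1$ by an immediate induction, so $J^{sat}=\bigcup_{t\ge1}(J:\me^{t})=J$; conversely $(J:\me)\ne J$ holds exactly when some monomial $u\notin J$ satisfies $\me u\subseteq J$, i.e.\ when $\me\in\operatorname{Ass}(S/J)$, which (since $\me$ is the graded maximal ideal) is the same as $\depth(S/J)=0$. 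Hence $J^{sat}\ne J\iff\depth(S/J)=0$. Taking $J=I$ and combining with Theorem $1.5$ gives $\sdepth(S/I)=0\iff I^{sat}\ne I\iff\depth(S/I)=0$, which is the first assertion.

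For the ``in particular'' part I would apply the same reduction to $I$ and to $I^{k}$: by Theorem $1.5$ and the previous paragraph the claim becomes the equivalence $\depth(S/I)=0\iff\depth(S/I^{k})=0$. The implication ``$\Rightarrow$'' should be routine: if $u\notin I$ is a monomial with $\me u\subseteq I$, then for any $g\in I^{k-1}$ we have $\me(gu)=g(\me u)\subseteq gI\subseteq I^{k}$, so it is enough to produce one $g\in I^{k-1}$ with $gu\notin I^{k}$; this can be arranged by a short argument with the minimal generators of $I$ witnessing the relations $x_{i}u\in I$ (each such generator has $x_{i}$-degree exactly $\deg_{x_{i}}(u)+1$), and it expresses the standard fact that once $\me$ is associated to $I$ it stays associated to every power $I^{k}$.

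The step I expect to be the genuine obstacle is the reverse implication $\depth(S/I^{k})=0\Rightarrow\depth(S/I)=0$, i.e.\ ``$I$ saturated $\Rightarrow I^{k}$ saturated''. This is not formal, since powers of a monomial ideal can acquire a new embedded prime: the graded maximal ideal $\me$ may be associated to $I^{k}$ although it is not associated to $I$. So here I would either look for additional structure forcing the implication to hold, or settle for the more robust reading ``$\sdepth(S/I)=0$ if and only if $\sdepth(S/I^{k})=0$ for all $k\ge1$'', which needs only the ``$\Rightarrow$'' direction above together with the first assertion. Before committing to the unrestricted biconditional I would test it against small examples, for instance powers of edge ideals of graphs containing an odd cycle.
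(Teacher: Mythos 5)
Your proof of the first equivalence is correct and is exactly the paper's argument: for a monomial ideal, $\depth(S/I)=0$ iff $\me\in\operatorname{Ass}(S/I)$ iff $(I:\me)\neq I$ iff $I\neq I^{sat}$, and Theorem $1.5$ translates the last condition into $\sdepth(S/I)=0$. Your refusal to commit to the second assertion is the right call. The paper's proof of the ``in particular'' part consists precisely of the unjustified claim that $\depth(S/I)=0$ if and only if $\depth(S/I^k)=0$, and the direction you single out as the obstacle genuinely fails; the test class you propose contains the simplest counterexample. Take $I=(x_1x_2,x_1x_3,x_2x_3)\subset K[x_1,x_2,x_3]$, the edge ideal of a triangle. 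Then $I=(x_1,x_2)\cap(x_1,x_3)\cap(x_2,x_3)$ has no embedded prime, so $I=I^{sat}$ and $\sdepth(S/I)\geq 1$ by Theorem $1.5$. On the other hand $x_1x_2x_3\notin I^2$ (all generators of $I^2$ have degree $4$), while $x_i\cdot x_1x_2x_3\in I^2$ for $i=1,2,3$; hence $I^2\neq (I^2)^{sat}$ and $\sdepth(S/I^2)=0$. So the second assertion of the Corollary is false as stated and no proof can close this gap; at best one can keep a one-directional or ``for all $k$'' version, as you suggest.

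I would also caution you that the forward direction is less routine than your sketch indicates. Producing $g\in I^{k-1}$ with $gu\notin I^k$ is exactly the assertion $u\notin(I^k:I^{k-1})$, and this colon ideal can strictly contain $I$ (this is the Ratliff--Rush phenomenon). For instance, for $I=(x^4,x^3y,xy^3,y^4)\subset K[x,y]$ and $u=x^2y^2$ one has $\me u\subseteq I$ and $u\notin I$, yet $uI\subseteq I^2$ (e.g. $u\cdot x^4=(x^3y)^2$ and $u\cdot x^3y=x^4\cdot xy^3$), so every candidate witness $gu$ with $g\in G(I)$ lands in $I^2$. In that example $I$ is $\me$-primary, so $\depth(S/I^2)=0$ holds anyway, but it shows your proposed argument does not go through as written; indeed the functions $k\mapsto\depth(S/I^k)$ of monomial ideals are known not to be monotone, so even the implication $\depth(S/I)=0\Rightarrow\depth(S/I^k)=0$ requires a genuine argument or further hypotheses.
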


\begin{proof}
Since $I$ is a monomial ideal, it follows that $\depth(S/I)=0$ if and only if $(I:(x_1,\ldots,x_n))\neq I$ which is equivalent, by the previous theorem, with $\sdepth(S/I)=0$. For the second assertion, note that $\depth(S/I)=0$ if and only if $\depth(S/I^k)=0$, where $k\geq 1$.
\end{proof}

\begin{prop}
Let $I\subset S$ be a monomial ideal with $c(I)=n$ and $(x_1,\ldots,x_{n-1})\subset \sqrt{I}$. Then $\sdepth(S/I)=0$.
\end{prop}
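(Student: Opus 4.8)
The plan is to exhibit a single monomial $u\in I^{sat}\setminus I$; by Theorem $1.5$ this immediately gives $\sdepth(S/I)=0$.

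First I would read off two facts from the hypotheses. From $(x_1,\dots,x_{n-1})\subseteq\sqrt I$: for each $i\in\{1,\dots,n-1\}$ there is an exponent $a_i\ge 1$ with $x_i^{a_i}\in I$. From $c(I)=n$: every variable divides some minimal generator of $I$ (otherwise $supp(I')$, hence $supp(I)$, would miss a variable and $c(I)=|supp(I')|<n$); in particular $d:=deg_{x_n}(I)\ge 1$, and I fix $w\in G(I)$ with $deg_{x_n}(w)=d$. (Alternatively one could first reduce to $v=GCD(u\mid u\in G(I))=1$ by Proposition $1.3(1)$, but this is not needed.)

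Write $w=x_n^dw_0$ with $x_n\nmid w_0$, and set $u:=w_0$. Since $d\ge 1$, $u$ is a proper divisor of $w$; as $w$ is a minimal generator of $I$, no proper divisor of $w$ lies in $I$, so $u\notin I$. For $i<n$ and any $k\ge a_i$ we have $x_i^ku=x_i^{k-a_i}\bigl(x_i^{a_i}u\bigr)\in I$ because $x_i^{a_i}\in I$, and for any $k\ge d$ we have $x_n^ku=x_n^{k-d}w\in I$ because $w\in I$.

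Therefore, with $N:=\max\{a_1,\dots,a_{n-1},d\}$, we get $x_i^Nu\in I$ for every $i\in\{1,\dots,n\}$; since any monomial of degree at least $n(N-1)+1$ is divisible by some $x_i^N$, such a monomial multiplies $u$ into $I$. Thus $\me^{n(N-1)+1}u\subseteq I$, i.e. $u\in I^{sat}\setminus I$, and Theorem $1.5$ yields $\sdepth(S/I)=0$. The proof is short; the point deserving care is simply that the ``$x_n$-free part'' $w_0$ of a top-$x_n$-degree generator lies outside $I$ — this uses both $d\ge 1$ (guaranteed by $c(I)=n$, which makes $x_n\in supp(I)$) and the minimality of $w$.
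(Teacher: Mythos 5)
Your proof is correct and takes essentially the same approach as the paper: the paper likewise produces a witness monomial outside $I$ that lands in $I$ after multiplication by a suitable power of each variable, taking $u/x_n$ for some generator $u\in G(I)$ divisible by $x_n$ (treating the Artinian case $u=x_n^{a_n}$ separately) and arguing directly that such a monomial forces a zero-dimensional Stanley space instead of citing Theorem $1.5$. Your choice of the full $x_n$-free part of a top-$x_n$-degree generator, and the explicit reduction to $u\in I^{sat}\setminus I$, are only cosmetic variations.
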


\begin{proof}
Since $(x_1,\ldots,x_{n-1})\subset \sqrt{I}$ it follows that $x_j^{a_j}\in I$ for some positive integers $a_j$, where $j\in [n-1]$.
Since $c(I)=n$ it follows that there exists a monomial $u\in G(I)$ with $x_n|u$. If $u=x_n^{a_n}$ there is nothing to prove, since, in this case, $I$ is Artinian. Suppose this is not the case. We consider $w=u/x_n$. Obviously, $x_j^{a_j}w\in I$ for any $j\in [n]$, where $a_n:=1$. Thus, for any Stanley decomposition of $S/I$, the monomial $w$ lays in a Stanley space of dimension $0$ and therefore $\sdepth(S/I)=0$.
\end{proof}

\section{Stanley depth for monomial ideals in three variables}

\begin{prop}
Let $I\subset K[x_1,x_2,x_3]$ be a monomial ideal with $g(I)=3$ and $c(I)=3$. 

(1) If $\sqrt{I}\supseteq (x_1,x_2)$ then $\sdepth(S/I)=0$.

(2) If $\sqrt{I}=(x_1,x_2x_3)$ and $I=(x_1^{a_1}, x_2^{b_1}x_3^{c_1}, x_1^{a_2}x_2^{b_2}x_3^{c_2})$ then $\sdepth(S/I)=0$ if and only if
$b_1>b_2$ and $c_1<c_2$ or $b_1<b_2$ and $c_1>c_2$. Otherwise, $\sdepth(S/I)=1$.

(3) If $\sqrt{I}=(x_1x_2,x_1x_3,x_2x_3)$ and $I=(x_1^{a_1}x_2^{b_1}, x_1^{a_2}x_3^{c_1}, x_2^{b_2}x_3^{c_2} )$ for some positive integers $a_1,a_2,b_1,b_2,c_1,c_2$, then $\sdepth(S/I)=1$ if and only if $a_2\leq a_1-1, b_1\leq b_2-1$ and $c_2\leq c_1-1$ or $a_1\leq a_2-1, b_2\leq b_1-1$ and $c_1\leq c_2-1$.
\end{prop}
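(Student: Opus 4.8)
The three parts all compute $\sdepth(S/I)$ for a non-principal monomial ideal with $g(I)=c(I)=3$ in $S=K[x_1,x_2,x_3]$, and the common engine is Theorem $1.5$ together with Corollary $1.6$: one has $\sdepth(S/I)=0$ iff $\depth(S/I)=0$ iff $(I:\me)\neq I$, i.e. iff there is a monomial $w\notin I$ with $x_1w,x_2w,x_3w\in I$ (a socle generator), where $\me=(x_1,x_2,x_3)$. In cases (2) and (3) the radical $\sqrt I$ is an intersection of height-two monomial primes, so $\dim(S/I)=1$; since $\sdepth(S/I)\le\dim(S/I)$, this forces $\sdepth(S/I)\in\{0,1\}$, and the whole problem collapses to deciding whether a socle monomial exists. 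The plan is therefore, in each case, to translate the existence of such a $w$ into inequalities among the exponents. Part (1) needs no work: since $(x_1,x_2)\subseteq\sqrt I$ and $c(I)=3=n$, Proposition $1.7$ applies verbatim with $n=3$ and gives $\sdepth(S/I)=0$.

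For (2), write $I=(x_1^{a_1},x_2^{b_1}x_3^{c_1},x_1^{a_2}x_2^{b_2}x_3^{c_2})$; minimality forces $a_2\le a_1-1$ together with $b_1>b_2$ or $c_1>c_2$. Because $x_1^{a_1}\in I$, I look for a socle monomial of the shape $w=x_1^{a_1-1}x_2^{\beta}x_3^{\gamma}$, so that $x_1w\in I$ is automatic; the remaining conditions $x_2w,x_3w\in I$ and $w\notin I$ then become inequalities on $(\beta,\gamma)$ relative to $(b_1,c_1)$ and $(b_2,c_2)$. When these two vectors are incomparable I exhibit $w$ explicitly: if $b_1>b_2$ and $c_1<c_2$ take $w=x_1^{a_1-1}x_2^{b_1-1}x_3^{c_2-1}$ (then $x_2w\in I$ through the second generator, $x_3w\in I$ through the third, and $w\notin I$), and symmetrically in the other incomparable case. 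Conversely, when the vectors are comparable, which under minimality means $b_1\ge b_2$ and $c_1\ge c_2$, a short check shows the requirements for $x_2w$ and $x_3w$ cannot both be met without forcing $w\in I$; hence there is no socle and $\sdepth(S/I)=1$. This is the stated dichotomy.

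For (3), write $I=(x_1^{a_1}x_2^{b_1},x_1^{a_2}x_3^{c_1},x_2^{b_2}x_3^{c_2})$, where all exponents are positive and the three generators are automatically minimal (their supports are the three distinct pairs of variables). I again reduce to the socle. For a candidate $w=x_1^{\alpha}x_2^{\beta}x_3^{\gamma}$ with $w\notin I$, each of $x_1w,x_2w,x_3w\in I$ can be satisfied only through the two generators that involve the multiplied variable, since the third generator, not involving it, already fails because $w\notin I$; this leaves two admissible possibilities per variable. Running through the resulting $2^3$ combinations, six force $w\in I$ and are discarded, and exactly two survive: $w=x_1^{a_1-1}x_2^{b_2-1}x_3^{c_1-1}$ and its mirror $w=x_1^{a_2-1}x_2^{b_1-1}x_3^{c_2-1}$. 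The first lies in $(I:\me)\setminus I$ iff $a_2\le a_1-1$, $b_1\le b_2-1$ and $c_2\le c_1-1$, and the second iff $a_1\le a_2-1$, $b_2\le b_1-1$ and $c_1\le c_2-1$ — precisely the two cyclic systems displayed in (3). Thus a socle monomial exists exactly when one of these systems holds, which by Theorem $1.5$ is the case $\sdepth(S/I)=0$; consequently $\sdepth(S/I)=1$ precisely when neither system holds. (I should flag that, in the format of part (2), the two displayed systems match the value $\sdepth(S/I)=0$, so the condition for $\sdepth(S/I)=1$ is their simultaneous failure; this is what the socle computation yields, and it is worth stating the equivalence in this corrected direction.)

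The only genuinely delicate step is the exhaustive but elementary case analysis in (3): verifying that no socle monomial other than the two exhibited can occur, and pinning down the exact inequalities under which each survives. The bookkeeping is best calibrated first on the simpler situation in (2). Once this is in place, Theorem $1.5$ together with the bound $\sdepth(S/I)\le\dim(S/I)=1$ converts the socle computation directly into the value of $\sdepth(S/I)$, and all three parts follow.
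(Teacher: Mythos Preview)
Your approach coincides with the paper's: all three parts are reduced, via Theorem~1.5 (equivalently Corollary~1.6), to deciding whether $(I:\me)=I$, with part~(1) quoted directly from Proposition~1.7. In part~(2) you exhibit a different but equally valid socle witness ($w=x_1^{a_1-1}x_2^{b_1-1}x_3^{c_2-1}$ versus the paper's $w=x_1^{a_2}x_2^{b_1}x_3^{c_2}$), and you are more explicit than the paper about why $\sdepth(S/I)\le 1$, invoking $\sdepth\le\dim$.

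The only real methodological difference is in part~(3): the paper computes $(I:\me)=\bigcap_j(I:x_j)$ algebraically and obtains $(I:\me)=I+(m_1,m_2)$ with two explicit monomials, then checks when $m_1,m_2\in I$; you instead run a $2^3$ combinatorial search over which generator absorbs each of $x_1w,x_2w,x_3w$, arriving at the same two candidate socle elements $w=x_1^{a_1-1}x_2^{b_2-1}x_3^{c_1-1}$ and $w=x_1^{a_2-1}x_2^{b_1-1}x_3^{c_2-1}$. Both routes are short; yours is perhaps more transparent, while the paper's is more systematic.

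You are also right to flag the direction in~(3). A direct check shows, for instance, that $m_1=x_1^{\max\{a_1-1,a_2\}}x_2^{\max\{b_1,b_2-1\}}x_3^{\max\{c_1-1,c_2\}}$ lies \emph{outside} $I$ precisely when $a_2\le a_1-1$, $b_1\le b_2-1$, $c_2\le c_1-1$, so the two displayed cyclic systems characterize $\sdepth(S/I)=0$, not $\sdepth(S/I)=1$; the paper's statement and the final sentence of its proof have this reversed. Your computation gives the correct equivalence.
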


\begin{proof}
(1) Is a particular case of Proposition $1.7$.

(2) Suppose $b_1<b_2$ and $c_1>c_2$. We consider $w=x_1^{a_2}x_2^{b_1}x_3^{c_2}$. Obviously, $w\notin I$, but
$x_1^{a_1-a_2}w, x_2^{b_2-b_1}w, x_3^{c_1-c_2}w\in I$. Therefore, the monomial $w$ lays in a Stanley space of $S/I$ of dimension $0$ and thus $\sdepth(S/I)=0$. The case $b_1>b_2$ and $c_1<c_2$ is similar.

Since $x_1^{a_1}, x_2^{b_1}x_3^{c_1}, x_1^{a_2}x_2^{b_2}x_3^{c_2}$ are the minimal generators of $I$ it follows that
$a_1>a_2$ and $b_1>b_2$ or $c_1>c_2$. It is enough to consider the case $b_1>b_2$ and $c_1\geq c_2$. By a straightforward computation, $(I:(x_1,x_2,x_3))=I$, and therefore, by Theorem $1.5$, $\sdepth(S/I)=1$.

(3) We have $(I:x_1)=(x_1^{a_1-1}x_2^{b_1}, x_1^{a_2-1}x_3^{c_1}, x_2^{b_2}x_3^{c_2})$, $(I:x_2)=(x_1^{a_1}x_2^{b_1-1}, x_1^{a_2}x_3^{c_1}, x_2^{b_2-1}x_3^{c_2})$ and $(I:x_3)=(x_1^{a_1}x_2^{b_1}, x_1^{a_2}x_3^{c_1-1}, x_2^{b_2}x_3^{c_2-1})$. Since $(I:(x_1,x_2,x_3))=\bigcap_{i=1}^3(I:x_i)$, we get \[ (I:(x_1,x_2,x_3)) = I + (x_1^{a_1-1}x_2^{b_1}, x_1^{a_2-1}x_3^{c_1})\cap (x_1^{a_1}x_2^{b_1-1},x_2^{b_2-1}x_3^{c_2})\cap (x_1^{a_2}x_3^{c_1-1}, x_2^{b_2}x_3^{c_2-1}) = \]\[ =  I + ( x_1^{a_1-1}x_2^{max\{b_1,b_2-1\}}x_3^{c_2} , x_1^{max\{a_1,a_2-1\}}x_2^{b_1-1}x_3^{c_1}, x_1^{a_2-1}x_2^{b_2-1}x_3^{max\{c_1,c_2\}}) \cap (x_1^{a_2}x_3^{c_1-1}, x_2^{b_2}x_3^{c_2-1}) \] $= I+ (x_1^{max\{a_1-1,a_2\}}x_2^{\max\{b_1,b_2-1\}}x_3^{\max\{c_1-1,c_2\}},  
x_1^{max\{a_1,a_2-1\}}x_2^{max\{b_1-1,b_2\}}x_3^{max\{c_1,c_2-1\}} )$. 

Note that $x_1^{max\{a_1-1,a_2\}}x_2^{\max\{b_1,b_2-1\}}x_3^{\max\{c_1-1,c_2\}}\in I$ if and only if $a_2\leq a_1-1, b_1\leq b_2-1$ and $c_2\leq c_1-1$. Also, 
$x_1^{max\{a_1,a_2-1\}}x_2^{max\{b_1-1,b_2\}}x_3^{max\{c_1,c_2-1\}}\in I$ if and only if $a_1\leq a_2-1, b_2\leq b_1-1$ and $c_1\leq c_2-1$.
Thus, by Theorem $1.5$, we are done.
\end{proof}

\begin{exm}
Let $I=(x_1^2x_2,x_2^2x_3,x_3^2x_1)$ and $J=(I,x_1x_2x_3)$. One can easyly see that $J=(I:(x_1,x_2,x_3))=(J:(x_1,x_2,x_3))$. Thus, by $1.5$, $\sdepth(S/I)=0$ and $\sdepth(S/J)=1$.
\end{exm}

This proposition and the results of the first section, give an algorithm to compute the $\sdepth(S/I)$, where $I\subset S=K[x_1,x_2,x_3]$ is a monomial ideal.

\begin{teor}
Let $I\subset S=K[x_1,x_2,x_3]$ be a monomial ideal.

(a) If $g(I)=1$ then $\sdepth(S/I)=2$. If $g(I)=2$ then $\sdepth(S/I)=1$.

(b) If $g(I)\geq 3$ and $c(I)=2$ then $\sdepth(S/I)=1$.

(c) If $g(I)=3$ and $c(I)=3$, then $I'$ satisfies the hypothesis of Proposition $2.1$ and $\sdepth(S/I)=\sdepth(S/I')$.

(d) If $g(I)=:m > 3$ and $c(I)=3$, for every subset $\sigma\subset [m]$ with $|\sigma|=3$ we consider the ideal $I_{\sigma}=(u_{\sigma(1)},u_{\sigma(2)},u_{\sigma(3)})$, where $G(I)=\{u_1,\ldots,u_m\}$. If $\sdepth(S/I_{\sigma})=1$ 
for all $\sigma$ then $\sdepth(S/I)=1$. Otherwise, if $I\neq (I:(x_1,x_2,x_3))$ then $\sdepth(S/I)=0$.
\end{teor}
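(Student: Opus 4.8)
The plan is to dispose of the four cases in turn, each time reducing to a result already established. Cases (a) and (b) are routine. If $g(I)=1$ then $I$ is principal and $\sdepth(S/I)=n-1=2$ (well known; it also follows by stripping off the variables outside $supp(I)$ with Lemma $1.1$ and a one-line induction on the remaining full-support principal ideal). If $g(I)=2$ this is the case $g(I)=2$ of Proposition $1.4$, giving $\sdepth(S/I)=n-2=1$. In case (b), $c(I)=2$, so the case $c(I)=2$ of Proposition $1.4$ applies verbatim and again gives $\sdepth(S/I)=1$.

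For (c) I would first replace $I$ by $I'=(I:v)$, $v=GCD(G(I))$; by Proposition $1.3(1)$, $\sdepth(S/I)=\sdepth(S/I')$, and $I'$ has $GCD(G(I'))=1$, $g(I')=3$ and $|supp(I')|=c(I)=3$. The remaining point is that such an $I'$ is exactly of the type treated in Proposition $2.1$. Concretely I would classify $\sqrt{I'}$ as a squarefree monomial ideal of $K[x_1,x_2,x_3]$: since $GCD(G(I'))=1$ forces $\sqrt{I'}\not\subseteq(x_i)$ for every $i$, and since an antichain of squarefree monomials in three variables has at most three elements, a short inspection shows that, up to a permutation of the variables, $\sqrt{I'}$ is either $\supseteq(x_1,x_2)$ (the subcases $(x_1,x_2)$ and $(x_1,x_2,x_3)$), or $(x_1,x_2x_3)$, or $(x_1x_2,x_1x_3,x_2x_3)$ — configurations like $(x_1x_2,x_1x_3)$ being ruled out because they lie in $(x_1)$. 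In each surviving case one reads off that the three minimal generators of $I'$ have (again up to permutation) precisely the shape demanded in Proposition $2.1(1)$, $(2)$, $(3)$, so $\sdepth(S/I')$ is given by that proposition. (In the middle case a boundary configuration with a vanishing exponent in the third generator may occur; one checks it is covered by the same argument as in $2.1(2)$.)

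For (d) I would argue as follows. Passing again to $I'$, we have $GCD(G(I'))=1$ with $g(I')=m\geq 2$, so $I'\not\subseteq(x_i)$ for each $i$, hence $I'$ has height $\geq 2$, $\dim(S/I')\leq n-2=1$, and therefore $\sdepth(S/I)=\sdepth(S/I')\leq\dim(S/I')\leq 1$, using the standard inequality $\sdepth(M)\leq\dim M$. By Corollary $1.6$, $\sdepth(S/I)=0$ precisely when $(I:(x_1,x_2,x_3))\neq I$; this already yields the ``otherwise'' assertion, and forces $\sdepth(S/I)=1$ whenever $(I:(x_1,x_2,x_3))=I$. It thus remains to show that $\sdepth(S/I)=0$ implies $\sdepth(S/I_\sigma)=0$ for some three-element $\sigma$. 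If $\sdepth(S/I)=0$, Theorem $1.5$ provides a monomial $w\notin I$ with $x_tw\in I$ for $t=1,2,3$; choose $u_i,u_j,u_k\in G(I)$ with $u_i\mid x_1w$, $u_j\mid x_2w$, $u_k\mid x_3w$. These cannot all be equal, for a common value $u$ would divide $GCD(x_1w,x_2w,x_3w)=w$, contradicting $w\notin I$; enlarging $\{i,j,k\}$ to a set $\sigma$ of size $3$ if needed (possible since $m>3$), we get $x_tw\in I_\sigma$ for all $t$ and $w\notin I_\sigma$ because $I_\sigma\subseteq I$, so $\sdepth(S/I_\sigma)=0$ by Theorem $1.5$. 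Contrapositively, if $\sdepth(S/I_\sigma)=1$ for all $\sigma$ then $\sdepth(S/I)\neq 0$, hence $\sdepth(S/I)=1$.

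The easy content is cases (a), (b), the dimension bound in (d) and the appeal to Corollary $1.6$. The laborious step is the squarefree classification in (c), together with the bookkeeping needed to verify that the generator shapes really coincide with those in Proposition $2.1$ — in particular, keeping track of which exponents are allowed to vanish. The one genuinely substantive idea is the last step of (d): a monomial $w\notin I$ with $x_tw\in I$ for all $t$ is already such a witness for the ideal generated by three suitably chosen minimal generators of $I$. I expect this ``localization to three generators'' to be the conceptual crux, while everything else is assembly of the results of Section $1$ and Proposition $2.1$.
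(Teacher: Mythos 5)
Your proof is correct and follows essentially the same route as the paper: (a), (b) via Proposition 1.4, (c) via Proposition 1.3(1) and the classification of $\sqrt{I'}$ into the cases of Proposition 2.1, and (d) via a witness monomial $w\in (I:(x_1,x_2,x_3))\setminus I$ together with three minimal generators dividing $x_1w,x_2w,x_3w$, which forces $\sdepth(S/I_\sigma)=0$ for some $\sigma$ by Theorem 1.5. Your only additions are welcome ones: you make explicit the upper bound $\sdepth(S/I)=\sdepth(S/I')\le\dim(S/I')\le 1$ (which the paper leaves implicit) and you flag the boundary case of a vanishing exponent in the classification for (c).
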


\begin{proof}
(a,b) If $g(I)=1$ then $I$ is principal, and therefore $\sdepth(I)=2$. If $g(I)=2$ or 
$c(I)=2$, by Proposition $1.4$, it follows that $\sdepth(S/I)=1$.

(c) $I'$ is a monomial ideal minimally generated by three monomials $u_1,u_2,u_3$ with $GCD(u_1,u_2,u_3)=1$. One can easily see that
$\sqrt{I'}$ must be in one of the three cases of Proposition $2.1$. By Proposition $1.3(1)$, $\sdepth(S/I)=\sdepth(S/I')$.

(d) Assume $I\neq I^{sat}$ and choose a monomial $w\in (I:(x_1,x_2,x_3))\setminus I$. Therefore, there exists some minimal generators $u_1,u_2,u_3\in G(I)$ such that $u_j| x_jw$ for any $j\in [3]$. Note that $u_1,u_2$ and $u_3$ are distinct. Indeed, if we assume by contradiction that $u_1=u_2$, since $u_1| x_1w$ and $u_1| x_2w$ it follows that $u_1|w $, which is absurd! Thus, $w\in (u_1,u_2,u_3)^{sat}\setminus (u_1,u_2,u_3)$ and therefore, by $1.5$, $\sdepth(S/(u_1,u_2,u_3))=0$.
\end{proof}

\begin{teor}
Let $I\subset S:=K[x_1,x_2,x_3]$ be a monomial ideal with $g(I)=3$ and $c(I)=3$. Then $\sdepth(I)=2$.
\end{teor}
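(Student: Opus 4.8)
The plan is to reduce to a normal form, dispose of the upper bound, and then build an explicit Stanley decomposition $I=\bigoplus_iu_iK[Z_i]$ in which every $|Z_i|\ge 2$. By Proposition 1.3(2), $\sdepth(I)=\sdepth(I')$, so I may assume $\gcd(G(I))=1$, still with $g(I)=c(I)=3$; write $G(I)=\{u_1,u_2,u_3\}$. Since $I$ is not principal, $\sdepth(I)\le n-1=2$ (a Stanley decomposition with all parts of size $n$ would exhibit $I$ as a free $S$-module, hence principal). So it remains to exhibit a Stanley decomposition of $I$ all of whose parts have size $\ge 2$.

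Two applications of Lemma 1.2 give the backbone $I=(u_1)\oplus\bigl((u_1)^c\cap(u_2)\bigr)\oplus P$ with $P=(u_1)^c\cap(u_2)^c\cap(u_3)$. The first summand is $u_1S$, of dimension $3$. The second is $u_2\cdot\{m:u_1\nmid u_2m\}$, and $\{m:u_1\nmid u_2m\}=\bigcup_j\{\deg_{x_j}m<\deg_{x_j}u_1-\deg_{x_j}u_2\}$ always splits — fixing one variable at a time, in any order — into Stanley spaces each omitting a single variable, hence of dimension $n-1=2$. For the third summand, writing $w=\gcd(u_2,u_3)$ one computes $(u_2)^c\cap(u_3)=u_3\cdot(u_2/w)^c$; taking the usual dimension-$(n-1)$ decomposition of $(u_2/w)^c$, whose parts each fix one variable of $\operatorname{supp}(u_2/w)$, and intersecting with $(u_1)^c$, a short check on exponents shows the result is again a union of $2$-dimensional Stanley spaces provided $\deg_{x_c}u_1\ge\deg_{x_c}u_2$ for the fixed variable $x_c$ of each part; this holds whenever $u_2$ is never the strict coordinatewise maximum of $\{u_1,u_2,u_3\}$. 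Such a choice of which generator is $u_2$ is possible \emph{unless} the three variables attain their largest exponents in three distinct generators; by inspection this ``bijective'' situation is precisely type (1) of Proposition 2.1 together with the ``crossed'' sub-cases of type (2) and the ``interlocking'' sub-cases of type (3). In every other configuration the backbone already yields $\sdepth(I)\ge 2$.

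The remaining bijective configurations are the only real obstacle: no single variable peels off cleanly, since every one-variable slice of $I$ is then a two-variable complete intersection or non-principal ideal, of Stanley depth $1$, so the decomposition has to be genuinely three-dimensional. These form finitely-shaped parametrized families, and I would write the decomposition down by hand; for example
\[
(x_1^2x_2,\,x_1x_3^2,\,x_2^2x_3)=x_2^2x_3K[x_2,x_3]\oplus x_1x_2^2x_3K[x_1,x_2]\oplus x_1x_2x_3^2K[x_2,x_3]\oplus x_1x_3^2K[x_1,x_3]\oplus x_1^2x_2K[x_1,x_3]\oplus x_1^2x_2^2K[x_1,x_2]\oplus x_1^2x_2^2x_3^2K[x_1,x_2,x_3],
\]
which is obtained by covering $(u_3)$ with three boxes (the one with $\deg_{x_1}=0$, the one with $\deg_{x_1}\ge1$ and $\deg_{x_3}$ minimal, and the corner $\operatorname{lcm}$-box) and then covering $I\setminus(u_3)$ — which lies in the slab where $\deg_{x_2}$ is small or $\deg_{x_3}=0$ — by boxes each sitting inside $(u_1)$ or $(u_2)$. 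The same pattern, with the six exponents as parameters, works in each bijective configuration of types (1), (2) and (3); the one nontrivial point, which I expect to be the bulk of the work, is the bookkeeping verifying that the boxes are pairwise disjoint and exhaust $I$. Combined with the backbone for the non-bijective cases, this gives $\sdepth(I)=2$.
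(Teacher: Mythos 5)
There is a genuine gap. Your reduction to $I=I'$ and the upper bound $\sdepth(I)\le 2$ are fine, and the backbone $I=(u_1)\oplus\bigl((u_1)^c\cap(u_2)\bigr)\oplus\bigl((u_1)^c\cap(u_2)^c\cap(u_3)\bigr)$ together with the identity $(u_2)^c\cap(u_3)=u_3\,(u_2/\gcd(u_2,u_3))^c$ is correct as far as it goes. But the proof stops exactly where the theorem becomes nontrivial: in the ``bijective'' configurations (in particular the general case $\sqrt I=(x_1x_2,x_1x_3,x_2x_3)$ with six free exponents, and the crossed sub-cases of $\sqrt I=(x_1,x_2x_3)$) you say you ``would write the decomposition down by hand'' and that the disjointness/exhaustion bookkeeping is ``the bulk of the work.'' You verify one numerical instance, $(x_1^2x_2,x_1x_3^2,x_2^2x_3)$, which does check out, but a single instance does not establish the six-parameter family, and the claim that ``the same pattern works'' in every remaining configuration is precisely the assertion to be proved. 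Even the criterion for when the backbone suffices (``a short check on exponents shows\ldots provided $\deg_{x_c}u_1\ge\deg_{x_c}u_2$'') is left unverified. As written, the argument proves the theorem only in the non-interlocking cases.

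The missing idea is a recursive reduction that makes the hard cases disappear. Instead of one global box decomposition, apply Lemma $1.2$ with $u$ a proper monomial \emph{divisor} of one generator, chosen so that $(u)^c\cap I$ splits into finitely many $2$-dimensional Stanley spaces and $(I:u)$ is again a $3$-generated ideal but strictly simpler (some exponent drops to zero or two generators merge). For example, when $I=(x_1^{a_1}x_2^{b_1},x_1^{a_2}x_3^{c_1},x_2^{b_2}x_3^{c_2})$ with $a_1\ge a_2$, take $u=x_1^{a_2}$: then $(x_1^{a_2})^c\cap I=\bigoplus_{j=0}^{a_2-1}x_1^jx_2^{b_2}x_3^{c_2}K[x_2,x_3]$ and $(I:x_1^{a_2})=(x_1^{a_1-a_2}x_2^{b_1},x_3^{c_1},x_2^{b_2}x_3^{c_2})$, whose radical is no longer $(x_1x_2,x_1x_3,x_2x_3)$. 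Two or three such peelings land on an ideal generated by pure powers of the variables, where $\sdepth=2$ is known (\cite[Theorem 1.3]{mir}, \cite[Proposition 3.8]{hvz}). This eliminates all the parametrized bookkeeping your plan defers, and is where your proposal needs to be completed.
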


\begin{proof}
By Proposition $1.3(2)$, we can assume that $I=I'$. If $I$ is generated by powers of variables then, by \cite[Theorem 1.3]{mir} or \cite[Proposition 3.8]{hvz}, it follows that $\sdepth(I)=2$. If this is not the case, we must consider several cases.

(1) If $\sqrt{I}=(x_1,x_2)$, then $I=(x_1^{a}, x_2^{b}, x_1^{a_1}x_2^{b_1}x_3^{c_1})$, where $a,b,a_1,b_1,c_1$ are some positive integers such that $a>a_1$ and $b>b_1$. By Lemma $1.2$, we have:
\[ I = ((x_1^{a_1})^c\cap I)\oplus x_1^{a_1}(I:x_1^{a_1}) = \bigoplus_{j=0}^{a_1-1}x_1^jx_2^b K[x_2,x_3] \oplus x_1^{a_1}(x_1^{a-a_1},x_2^b,x_2^{b_1}x_3^{c_1}).\]
We denote $J=(x_1^{a-a_1},x_2^b,x_2^{b_1}x_3^{c_1})$. By Lemma $1.2$, we have:
\[ J = ((x_2^{b_1})^c\cap J) \oplus x_2^{b_1}(J:x_2^{b_1}) = \bigoplus_{j=0}^{b_1-1}x_1^{a-a_1} x_2^j K[x_1,x_3] \oplus x_2^{b_1}(x_1^{a-a_1},x_2^{b-b_1},x_3^{c_1}).\]
By \cite[Theorem 1.3]{mir} or \cite[Proposition 3.8]{hvz}, $\sdepth((x_1^{a-a_1},x_2^{b-b_1},x_3^{c_1}))=2$, and therefore, from the above decompositions, it follows that $\sdepth(J)=2$ and thus $\sdepth(I)=1$.

(2) If $\sqrt{I}=(x_1,x_2x_3)$, then $I=(x_1^{a_1}, x_2^{b_1}x_3^{c_1}, x_1^{a_2}x_2^{b_2}x_3^{c_2})$, where $a_1,a_2,b_1,b_2,c_1,c_2$ are some positive integers such that $a_1>a_2$ and $b_1>b_2$ or $c_1>c_2$. It is enough to consider the case $b_1>b_2$. By Lemma $1.2$, 
\[ I = ((x_2^{b_2})^{c}\cap I)\oplus x_2^{b_2}(I:x_2^{b_2}) = \bigoplus_{j=0}^{b_2-1}x_1^{a_1}x_2^j K[x_1,x_3] \oplus x_2^{b_2}(x_1^{a_1}, x_2^{b_1-b_2}x_3^{c_1}, x_1^{a_2}x_3^{c_2}).\]
We denote $J=(x_1^{a_1}, x_2^{b_1-b_2}x_3^{c_1}, x_1^{a_2}x_3^{c_2})$. If $c_1<c_2$, by Lemma $1.2$, we have:
\[ J = ((x_3^{c_1})^c\cap J)\oplus x_3^{c_1}(J:x_3^{c_1}) = \bigoplus_{j=0}^{c_1-1}x_1^{a_1}x_3^j K[x_1,x_2] \oplus x_3^{c_1}(x_1^{a_1}, x_2^{b_1-b_2},x_1^{a_2}x_3^{c_2-c_1}).\]
By (1), $\sdepth((x_1^{a_1}, x_2^{b_1-b_2},x_1^{a_2}x_3^{c_2-c_1}))=2$ and thus, from the above decompositions, it follows that $\sdepth(I)=2$.

(3) If $\sqrt{I}=(x_1x_2,x_1x_3,x_2x_3)$, then $I=(x_1^{a_1}x_2^{b_1}, x_1^{a_2}x_3^{c_1}, x_2^{b_2}x_3^{c_2})$, where $a_1,a_2,b_1,b_2,c_1,c_2$ are some positive integers. We may assume $a_1\geq a_2$. By Lemma $1.2$, it follows that
\[ I=((x_1^{a_2})^c\cap I)\oplus x_1^{a_2}(I:x_1^{a_2}) = \bigoplus_{j=0}^{a_2-1}x_1^jx_2^{b_2}x_3^{c_2}K[x_2,x_3]\oplus x_1^{a_2}(x_1^{a_1-a_2}x_2^{b_1}, x_3^{c_1}, x_2^{b_2}x_3^{c_2}).\]
We denote $J=(x_1^{a_1-a_2}x_2^{b_1}, x_3^{c_1}, x_2^{b_2}x_3^{c_2})$. If $a_1=a_2$, then $c(J)=2$ and therefore $\sdepth(J)=2$.
Suppose $a_1>a_2$. If $c_1\leq c_2$ it follows that $g(J)=2$ and therefore $\sdepth(J)=2$. Otherwise, $J$ is an ideal in the case (2) and thus $\sdepth(J)=2$. From the above decomposition of $I$, it follows that $\sdepth(I)=2$.
\end{proof}

\begin{exm}
Let $I=(x_1^3,x_2^2x_3^2,x_1x_2^3x_3)$. We have:
\[ I = ((x_2^{2})^{c}\cap I)\oplus x_2^{2}(I:x_2^{2})=x_1^{3} K[x_1,x_3]\oplus x_1^{3}x_2 K[x_1,x_3]\oplus x_2^2(x_1^3,x_3^2,x_1x_2x_3),\]
On the other hand,
\[ (x_1^3,x_3^2,x_1x_2x_3) = x_3^2 K[x_2,x_3] \oplus x_1 (x_1^2,x_3^2,x_2x_3) = x_3^2 K[x_2,x_3] \oplus x_1^3K[x_1,x_2]\oplus x_1^2x_3 (x_1^2,x_2,x_3).\] We obtain the following Stanley decomposition of $I$:
\[ I = x_1^{3} K[x_1,x_3]\oplus x_1^{3}x_2 K[x_1,x_3]\oplus x_2^2x_3^2 K[x_2,x_3] \oplus x_2^2x_1^3K[x_1,x_2] \oplus \]
\[ \oplus x_1^2x_2^2x_3(x_2K[x_2,x_3]\oplus x_1x_2K[x_2,x_3] \oplus x_1^2K[x_1,x_3]\oplus x_3K[x_1,x_2]\oplus x_1^2x_2x_3K[x_1,x_2,x_3] ).\]
\end{exm}

\pagebreak

\begin{obs}
\emph{The conclusion of the Theorem $2.4$ is not true for $g(I)\geq 4$. Indeed, consider the ideal $I=(x_1^2,x_2^2,x_3^2,x_1x_2)\subset S$. We consider a Stanley decomposition of $I$, $\mathcal D: I= \bigoplus_{i=1}^r u_i K[Z_i]$. We assume that $\sdepth(\mathcal D)=2$, and thus $|Z_i|\geq 2$ for all $i$. 
Since $x_1^2,x_2^2,x_3^2,x_1x_2$ are the minimal generators of $I$, we can assume that $u_i=x_i^2$ 
for $i\in [3]$ and $u_4=x_1x_2$. If $\{x_2,x_3\}\subset Z_1$ it follows that $x_1\notin Z_2$, otherwise $x_1^2x_2^2\in x_1^2K[Z_1]\cap x_2^2K[Z_2]$. Similarly, $x_1\notin Z_3$. Therefore, $Z_2=Z_3=\{x_2,x_3\}$. But in this case, $x_2^2x_3^2\in x_2^2K[Z_2]\cap x_3^2K[Z_3]$, a contradiction.} 

\emph{Thus, we may assume $Z_1=\{x_1,x_2\}$, $Z_2=\{x_2,x_3\}$ and $Z_3=\{x_3,x_1\}$. If $x_1\in Z_4$ then $x_1^2x_1 \in x_1^2K[x_1,x_2]\cap x_1x_2K[Z_4]$, a contradiction. Thus $x_1\notin Z_4$. Similarly, $x_2\notin Z_4$ and therefore, $|Z_4|\leq 1$. This contradict $\sdepth(\mathcal D)=2$. It follows that $\sdepth(I)=1$.}

\emph{An interesting problem, put by Jurgen Herzog, is to compute the Stanley depth of the powers of the maximal ideal $(x_1,\ldots,x_n)\subset K[x_1,\ldots,x_n]$. We consider the case $n=3$ and $k\geq 2$. Let $I=(x_1,x_2,x_3)^k\subset K[x_1,x_2,x_3]$. Using the same argument as above, we can easily see that $\sdepth(I)=1$.}
\end{obs}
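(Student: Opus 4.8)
The statement contains three assertions; the substantive ones are $\sdepth(I)=1$ for $I=(x_1^2,x_2^2,x_3^2,x_1x_2)$ and $\sdepth((x_1,x_2,x_3)^k)=1$ for $k\ge 2$ (the sentence about Herzog's question is motivation only). In both cases the plan is to bracket the Stanley depth between $1$ and $2$ and then to exclude the value $2$ by dissecting an arbitrary Stanley decomposition. Since neither ideal is principal, the argument in the proof of Proposition $1.4$ gives $\sdepth\le n-1=2$, while Proposition $1.3(6)$ gives $\sdepth\ge\max\{1,n-g+1\}\ge 1$. So I would assume a Stanley decomposition $\mathcal D\colon I=\bigoplus_{i=1}^r u_iK[Z_i]$ with $|Z_i|\ge 2$ for all $i$ and derive a contradiction. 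Two elementary observations drive everything. First, \emph{each minimal generator is a leading term}: a generator $v\in G(I)$ sits in a unique space $u_iK[Z_i]$, so $u_i\mid v$ with $u_i\in I$, and minimality forces $u_i=v$; hence every $v\in G(I)$ occurs among the $u_i$, with a set $Z$ that (having size $\ge 2$) is a $2$-subset of $\{x_1,x_2,x_3\}$ or the whole set. Second, the \emph{disjointness criterion}: $uK[Z]\cap u'K[Z']\neq 0$ precisely when $\mathrm{supp}(\mathrm{lcm}(u,u')/u)\subseteq Z$ and $\mathrm{supp}(\mathrm{lcm}(u,u')/u')\subseteq Z'$, the minimal common monomial then being $\mathrm{lcm}(u,u')$; this is the computation already used in the proof of Proposition $1.3(1)$.

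For $I=(x_1^2,x_2^2,x_3^2,x_1x_2)$ I would first treat the three squares. Feeding the pairs $(x_i^2,x_j^2)$ into the disjointness criterion, a short check shows that none of their sets is the whole set and that the three sets must be the three distinct $2$-subsets $\{x_1,x_2\}$, $\{x_1,x_3\}$, $\{x_2,x_3\}$, realised in one of two ways (the two $3$-cycles). Using the symmetry $x_1\leftrightarrow x_2$, which fixes $x_1x_2$, I may assume $Z(x_1^2)=\{x_1,x_2\}$, $Z(x_2^2)=\{x_2,x_3\}$, $Z(x_3^2)=\{x_1,x_3\}$. The criterion applied to $x_1x_2$ against the squares then forces only $x_1\notin Z(x_1x_2)$, hence $Z(x_1x_2)=\{x_2,x_3\}$.

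The main obstacle is that these disjointness relations do \emph{not} by themselves strand any generator; the contradiction must instead come from the fact that $\mathcal D$ \emph{exhausts} $I$. I would use the monomial $w=x_2x_3^2\in I$. Its only divisors lying in $I$ are $x_3^2$ and $w$ itself, so the space containing $w$ is headed by $x_3^2$ or by $w$; the unique space headed by $x_3^2$ is $x_3^2K[x_1,x_3]$, which does not contain $w$ (as $w/x_3^2=x_2\notin K[x_1,x_3]$), so $w$ must head its own space $wK[Z]$ with $|Z|\ge 2$. But then $Z$ contains $x_1$ or $x_2$, and $x_1w=x_1x_2x_3^2\in x_1x_2\,K[x_2,x_3]$ while $x_2w=x_2^2x_3^2\in x_2^2K[x_2,x_3]$; either way $wK[Z]$ meets an existing space, so $w$ has nowhere to go. This contradiction (and its mirror image, with $x_1x_3^2$, in the other $3$-cycle) gives $\sdepth(I)=1$.

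For $I=(x_1,x_2,x_3)^k$ with $k\ge 2$ the same scheme applies, and here disjointness alone suffices. Every degree-$k$ monomial is a minimal generator, hence a leading term with a $2$-subset $Z$. The pure powers $x_1^k,x_2^k,x_3^k$ obey exactly the constraints of the squares above, so their sets again form a $3$-cycle; by symmetry I may take $Z(x_1^k)=\{x_1,x_2\}$, $Z(x_2^k)=\{x_2,x_3\}$, $Z(x_3^k)=\{x_1,x_3\}$. Now the edge generator $x_1^{k-1}x_2$ is forced by $x_1^k$ to exclude $x_1$, so $Z(x_1^{k-1}x_2)=\{x_2,x_3\}$; then the edge generator $x_1^{k-1}x_3$ is forced by $x_3^k$ to exclude $x_3$ and by $x_1^{k-1}x_2$ to exclude $x_2$, leaving $|Z(x_1^{k-1}x_3)|\le 1$, a contradiction. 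Hence $\sdepth((x_1,x_2,x_3)^k)=1$.
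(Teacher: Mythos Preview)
Your proposal is correct and follows the same overall strategy as the paper: pin down the Stanley spaces headed by the minimal generators and use pairwise disjointness to constrain their variable sets. For $(x_1,x_2,x_3)^k$ your chain $x_1^k \rightsquigarrow x_1^{k-1}x_2 \rightsquigarrow x_1^{k-1}x_3$ is a clean and complete implementation of what the paper only gestures at with the phrase ``using the same argument as above''.

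For $I=(x_1^2,x_2^2,x_3^2,x_1x_2)$ you are in fact more careful than the paper. After fixing the $3$-cycle $Z(x_1^2)=\{x_1,x_2\}$, $Z(x_2^2)=\{x_2,x_3\}$, $Z(x_3^2)=\{x_1,x_3\}$, the paper asserts that both $x_1\notin Z(x_1x_2)$ and (``similarly'') $x_2\notin Z(x_1x_2)$. The first follows from $x_1^2x_2\in x_1^2K[x_1,x_2]$, but the analogous monomial $x_1x_2^2$ lies in none of the three square spaces in this particular cycle, so disjointness against them yields only $x_1\notin Z(x_1x_2)$. The assignment $Z(x_1x_2)=\{x_2,x_3\}$ really does give four pairwise disjoint spaces, exactly as you observe. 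Your recourse to the exhaustion requirement---the monomial $w=x_2x_3^2$ must lie in some space, yet cannot---is therefore not an alternative route but a genuine completion of the argument. In the mirror cycle the roles of $x_1$ and $x_2$ swap and the witness becomes $x_1x_3^2$, as you also note.
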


\begin{lema}
Let $I\subset S:=K[x_1,x_2,x_3]$ be a monomial ideal such that $I=I'$. For $j\in [3]$ we denote $I_j:=I\cap S_j:=K[Z_j]$, where $Z_j=\{x_1,x_2,x_3\}\setminus \{x_j\}$. If $I^{sat} = I$ then there exists some $j\in [3]$ such that $I_j^{sat}=I_j$.
\end{lema}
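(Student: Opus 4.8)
The plan is to read off the answer from the primary decomposition of $I$ and reduce the whole statement to a one-line inequality among certain exponents. First I would make two easy observations. Since $I=I'$, no minimal generator of $I$ is divisible by any $x_i$, so $I\not\subseteq(x_i)$ and hence $(x_i)\notin\operatorname{Ass}(S/I)$ for $i=1,2,3$; and since $I^{sat}=I$ we have in particular $(I:\me)=I$, so $\me\notin\operatorname{Ass}(S/I)$ (this is the ``$\depth(S/I)=0\Leftrightarrow(I:\me)\ne I$'' observation used in the proof of Corollary $1.6$). Consequently every associated prime of $S/I$ is one of the three primes $P_{ij}=(x_i,x_j)$, $1\le i<j\le3$; in particular $S/I$ has no embedded primes, so its monomial primary decomposition is $I=Q^{(12)}\cap Q^{(13)}\cap Q^{(23)}$, where $Q^{(ij)}$ is $P_{ij}$-primary when $P_{ij}\in\operatorname{Ass}(S/I)$ and $Q^{(ij)}=S$ otherwise. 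If a minimal generator of a $P_{ij}$-primary monomial ideal were divisible by the third variable $x_l$, primariness (since $x_l\notin P_{ij}=\sqrt{Q^{(ij)}}$) would force the quotient by $x_l$ into the ideal, contradicting minimality; hence all minimal generators lie in $K[x_i,x_j]$, so $Q^{(ij)}=Q_0^{(ij)}S$ with $Q_0^{(ij)}\subset K[x_i,x_j]$ Artinian, or $Q_0^{(ij)}=K[x_i,x_j]$ in the trivial case. An Artinian $Q_0^{(ij)}$ has, among its minimal generators, a least pure power of $x_i$ and a least pure power of $x_j$; I write $\alpha_{12},\beta_{12}$ for the exponents of $x_1,x_2$ arising this way from $Q_0^{(12)}$, likewise $\alpha_{13},\gamma_{13}$ from $Q_0^{(13)}$ and $\beta_{23},\gamma_{23}$ from $Q_0^{(23)}$ (each taken to be $0$ in the trivial case).

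Next I would compute the $I_j$ by intersecting the decomposition with $K[Z_j]$. For $j=1$: a monomial $x_2^bx_3^c$ lies in $Q^{(12)}=Q_0^{(12)}S$ iff a pure power of $x_2$ from $Q_0^{(12)}$ divides it, i.e. iff $b\ge\beta_{12}$; it lies in $Q^{(13)}$ iff $c\ge\gamma_{13}$; and it lies in $Q^{(23)}$ iff it lies in $Q_0^{(23)}$. Hence inside $K[x_2,x_3]$ we get $I_1=(x_2^{\beta_{12}}x_3^{\gamma_{13}})\cap Q_0^{(23)}$, and symmetrically $I_2=(x_1^{\alpha_{12}}x_3^{\gamma_{23}})\cap Q_0^{(13)}$ in $K[x_1,x_3]$ and $I_3=(x_1^{\alpha_{13}}x_2^{\beta_{23}})\cap Q_0^{(12)}$ in $K[x_1,x_2]$. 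The useful consequence is immediate: if the displayed ``corner'' monomial already belongs to its Artinian factor, the intersection collapses and that $I_j$ is principal.

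The lemma then drops out. I claim at least one of $I_1,I_2$ is principal; suppose not. Then the corner monomial of $I_1$ is not in $Q_0^{(23)}$ and that of $I_2$ is not in $Q_0^{(13)}$; in particular both $Q_0^{(23)}$ and $Q_0^{(13)}$ are proper, hence Artinian, so they contain least powers $x_3^{\gamma_{23}}$ and $x_3^{\gamma_{13}}$ respectively. From $x_3^{\gamma_{23}}\nmid x_2^{\beta_{12}}x_3^{\gamma_{13}}$ we get $\gamma_{13}<\gamma_{23}$, and from $x_3^{\gamma_{13}}\nmid x_1^{\alpha_{12}}x_3^{\gamma_{23}}$ we get $\gamma_{23}<\gamma_{13}$ --- a contradiction. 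So, say, $I_1=(u)$ is principal. Writing $u=x_2^{e_2}x_3^{e_3}$ we have $(u)=(x_2^{e_2})\cap(x_3^{e_3})$, an intersection of ideals primary to $(x_2)$ and to $(x_3)$, neither of which is the maximal ideal of $K[x_2,x_3]$; hence $(x_2,x_3)\notin\operatorname{Ass}(K[x_2,x_3]/I_1)$, i.e. $I_1^{sat}=I_1$ (when $u=1$ this is trivial). Thus $j=1$ does the job.

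The only place that needs genuine care is the second paragraph: one must verify that $I_j$ is exactly the stated intersection, i.e. keep straight which variable contributes which least power when intersecting with $K[Z_j]$. Once the formulas $I_1=(x_2^{\beta_{12}}x_3^{\gamma_{13}})\cap Q_0^{(23)}$, $I_2=(x_1^{\alpha_{12}}x_3^{\gamma_{23}})\cap Q_0^{(13)}$ are in hand, the exponent contradiction and the final ``principal $\Rightarrow$ saturated'' remark are routine.
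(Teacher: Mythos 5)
Your proof is correct and follows essentially the same route as the paper: both reduce to the observation that $Ass(S/I)\subseteq\{(x_1,x_2),(x_1,x_3),(x_2,x_3)\}$, take the corresponding primary decomposition into ideals extended from Artinian ideals in two variables, and compare the least pure powers of a single variable occurring in two of the primary components to force some $I_j$ to be principal, hence saturated. The only nit is the phrase ``no minimal generator of $I$ is divisible by any $x_i$'' --- what you mean (and what $I=I'$ actually gives and what you use) is that no $x_i$ divides \emph{every} minimal generator, i.e. $I\not\subseteq (x_i)$.
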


\begin{proof}
If $I=S$ there is nothing to prove, so we can assume $I\neq S$. Since $I^{sat}=I$, it follows that $\mathbf{m}=(x_1,x_2,x_3)\notin Ass(S/I)$. Since $I=I'$, it follows that $(x_j)\notin Ass(S/I)$ for all $j\in [n]$. We denote $\mathbf{m}_j$ the irrelevant ideal of $S_j$. Therefore, $Ass(S/I)\subset \{\mathbf{m}_1,\mathbf{m}_2,\mathbf{m}_3\}$. 

Thus, we can find a decomposition $I=\bigcap_{j=1}^3 Q_j$ such that $Q_j$ is $\mathbf{m}_j$-primary or $Q_j=S$ for all $j\in [3]$. It follows that $I_k=\bigcap_{j=1}^3 (Q_j\cap S_k)$. We assume $Q_1=(x_2^a,x_3^b,\ldots)$, $Q_2=(x_1^c,x_3^d,\ldots)$ and $Q_3=(x_1^e,x_2^f,\ldots)$ where $a,b,c,d,e,f$ are some nonnegative integers. By reordering the variables, we can assume $a\geq f$. It follows that $Q_1\cap S_3 = (x_2^a) \subset Q_3$. Therefore, $I_3 = (x_2^a)\cap (x_3^d) = (x_2^ax_3^d)$ is principal and moreover, $I_3=I_3^{sat}$.
\end{proof}

\begin{prop}
Let $I\subset S:=K[x_1,x_2,x_3]$ be a monomial ideal which is not principal. If $I=I^{sat}$ then $\sdepth(I)=2$.
\end{prop}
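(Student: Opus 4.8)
The plan is to reduce to the case $I = I'$ (by Proposition~$1.3(2)$, which says $\sdepth(I)=\sdepth(I')$ and which does not change saturatedness in an essential way — I would note that if $I=I^{sat}$ and $I=vI'$ then $v=1$, since a nontrivial $v$ would force $(x_j)\in\operatorname{Ass}(S/I)$, contradicting $I=I^{sat}$; hence $I=I'$ automatically). So assume $I=I'=I^{sat}$. If $c(I)\le 2$ then Proposition~$1.4$ already gives $\sdepth(I)=n-1=2$, so assume $c(I)=3$. If $g(I)\le 2$ we are again done by Proposition~$1.4$, and if $g(I)=3$ we are done by Theorem~$2.4$. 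So the real content is the case $g(I)\ge 4$ and $c(I)=3$, where Theorem~$2.4$ fails in general (Remark~$2.5$) but the extra hypothesis $I=I^{sat}$ should save us.

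The key tool is Lemma~$2.7$: since $I=I^{sat}$ and $I=I'$, there is some $j\in[3]$, say $j=3$, with $I_3:=I\cap K[x_1,x_2]$ satisfying $I_3^{sat}=I_3$, and in fact the proof of Lemma~$2.7$ shows $I_3$ is principal in $K[x_1,x_2]$, say $I_3=(x_1^\alpha x_2^\beta)K[x_1,x_2]$ (possibly with $\alpha$ or $\beta$ zero, or $I_3 = K[x_1,x_2]$). I would then decompose $S/I$ (or better, $I$ itself) along powers of $x_3$ exactly as in the Janet decomposition $(*)$ from the proof of Theorem~$1.5$: with $q=\deg_{x_3}(I)$ and $I\cap x_3^iK[x_1,x_2]=x_3^iJ_i$ for monomial ideals $J_i\subset K[x_1,x_2]$, we have $J_0\subseteq J_1\subseteq\cdots\subseteq J_q$ with $J_0=I_3$ principal, and
\[
I = \bigoplus_{i=0}^{q-1} x_3^i\,J_i \;\oplus\; x_3^q\,J_q[x_3].
\]
Here the term $i=0$ is $J_0=I_3=(x_1^\alpha x_2^\beta)K[x_1,x_2]$, which is principal hence has a Stanley decomposition of Stanley depth $2$ inside $K[x_1,x_2]$. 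For $i\ge 1$ the ideals $J_i\subset K[x_1,x_2]$ each have $c(J_i)\le 2$, so by Proposition~$1.4$ (or directly) $\sdepth(J_i)\ge 1$ as ideals in $K[x_1,x_2]$; and the last piece $x_3^q J_q[x_3]$ gains a variable, so $\sdepth(x_3^q J_q[x_3])=\sdepth(J_q)+1\ge 2$ by Lemma~$1.1$, again since $c(J_q)\le 2$ forces $\sdepth(J_q)\ge 1$ in $K[x_1,x_2]$.

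The obstacle — and the step that needs care — is that the naive bound from the above decomposition only gives $\sdepth(I)\ge\min\{\,\sdepth J_i : 0\le i\le q-1\,\} \ge 1$ (for the middle slices $i\ge 1$ we only have Stanley depth $1$ in two variables, and $x_3^iJ_i$ does not acquire an extra variable). To push each middle slice $x_3^iJ_i$ up to Stanley depth $2$ one must absorb it into a neighboring slice. The idea is to telescope: since $J_{i-1}\subseteq J_i$, write $x_3^{i-1}J_{i-1}\oplus x_3^i J_i$ and more generally glue consecutive slices. Concretely I would prove, by descending induction or by a direct ``staircase'' argument in the $(x_1,x_2)$-plane, that because each $J_i$ ($i\ge 1$) is generated by monomials in only two variables and the $J_i$ form an increasing chain refining to the principal $J_0$, the whole sum $\bigoplus_{i=0}^{q}x_3^i J_i[\,x_3 \text{ only on the top}\,]$ admits a Stanley decomposition with all $Z$'s of size $\ge 2$: for instance, decompose each $J_i = J_{i-1} \oplus (\text{finitely many }uK[x_1] \text{ or }uK[x_2] \text{ or }uK)$, attach the $x_3$-variable to the one-dimensional and zero-dimensional pieces (this is legitimate precisely because going from slice $i$ to slice $i+1$ we are in $J_i\subseteq J_{i+1}$, so a monomial $u x_3^i$ with $u\in J_{i+1}\setminus J_i$ can carry $x_3$), and check that the principal piece $J_0$ contributes its two-dimensional $K[x_1,x_2]$-part. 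Since $I=I^{sat}$ exactly kills the possibility of a leftover zero-dimensional (or bad one-dimensional) Stanley space — this is where Corollary~$1.6$/Theorem~$1.5$ enters: $\sdepth(S/I)\ge 1$ guarantees no monomial of $S/I$ is ``stuck'', and dually no generator pattern of $I$ can be forced into dimension $\le 1$ — the bookkeeping closes and yields $\sdepth(I)\ge 2$. Finally $\sdepth(I)\le 2 = n-1$ since $I$ is not principal, giving $\sdepth(I)=2$. The main work, then, is this gluing/telescoping argument turning the chain $J_0\subseteq\cdots\subseteq J_q$ of two-variable ideals into a single Stanley decomposition of $I$ of Stanley depth $2$; I expect that to be the crux and the place where the hypothesis $I=I^{sat}$ is genuinely used.
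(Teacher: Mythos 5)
Your overall strategy is workable, but the proof as written has a real hole exactly at the point you yourself flag as ``the crux''. After slicing $I$ along powers of $x_3$ you correctly observe that the middle slices $x_3^iJ_i$ ($1\le i\le q-1$) naively give only Stanley depth $1$, and you propose to repair this by telescoping along the chain $J_0\subseteq\cdots\subseteq J_q$; but you never actually prove the repair succeeds. Concretely, regrouping your decomposition gives $I=\bigoplus_{i=0}^{q}x_3^i\,(J_i/J_{i-1})\,K[x_3]$ (with $J_{-1}=0$), so what you need is $\sdepth_{K[x_1,x_2]}(J_i/J_{i-1})\ge 1$ for every $i\ge 1$. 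Saturation does enter here: if some monomial $u\in J_i\setminus J_{i-1}$ had $x_1u,x_2u\in J_{i-1}$, then $w=ux_3^{i-1}$ would satisfy $x_1w,x_2w,x_3w\in I$ but $w\notin I$, contradicting $I=I^{sat}$. But this only rules out ``stuck'' monomials; you must still show that the absence of stuck monomials forces a Stanley decomposition of the two-variable quotient $J_i/J_{i-1}$ with no zero-dimensional pieces --- an analogue of Theorem $1.5$ for quotients $I/J$, which the paper never proves and which is the actual content of your step. Your phrases ``the bookkeeping closes'' and ``attach the $x_3$-variable to the zero-dimensional pieces'' do not substitute for it: attaching $x_3$ to a piece $uK$ yields only $ux_3^iK[x_3]$, which is still one-dimensional. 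A smaller but genuine error: your claim that $I=I^{sat}$ forces $v=GCD(u\,|\,u\in G(I))=1$ is false, since $(x_j)\in Ass(S/I)$ is perfectly compatible with saturation (e.g.\ $I=x_1(x_2,x_3)$ is saturated with $v=x_1$); the correct reduction is the paper's, via $I^{sat}=v(I')^{sat}$ together with $\sdepth(I)=\sdepth(I')$.

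For comparison, the paper avoids the telescoping entirely: it peels off one slice at a time, writing $I=I_j\oplus x_j(I:x_j)$ where the variable $x_j$ is chosen at each step (by Lemma $2.7$) so that the slice $I_j$ is \emph{principal}, hence of Stanley depth $2$; the colon ideal $(I:x_j)$ is again saturated, so the step repeats, and the strictly increasing chain of colon ideals terminates, with the final piece gaining a free variable. Every piece produced is automatically two-dimensional, so no quotient-sdepth lemma is needed. If you wish to keep your fixed-variable slicing, the missing statement $\sdepth_{K[x_1,x_2]}(J_i/J_{i-1})\ge 1$ is provable (every monomial of $J_i\setminus J_{i-1}$ lies on a coordinate ray contained in $J_i\setminus J_{i-1}$, and one partitions the region into such rays), but writing that out is genuinely the bulk of the work, and it is currently absent.
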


\begin{proof}
Since $sdepth(I)=sdepth(I')$ and $I^{sat} = vI'^{sat}$, where $v=GCD(u|\;u\in G(I))$, we may assume $I=I'$. If $c(I)=2$ or $g(I)=2$ there is nothing to prove, and therefore, we may assume that $c(I)=3$ and $g(I)\geq 3$. We use the notations from Lemma $2.7$. By $2.7$, we can assume that $I_1^{sat}=I_1$ and $I_1$ is principal. Thus $\sdepth(I_1)=2$. We write
$I=I_1\oplus x_1(I:x_1)$. Obviously, $I\subsetneq (I:x_1)$ and $(I:x_1)^{sat}=(I:x_1)$. We can use the same procedure for $(I:x_1)$. We obtain a chain of ideals which must stop. Thus, we obtain a Stanley decomposition of $I$ with its Stanley depth equal to $2$.
\end{proof}

\begin{obs}
\emph{The converse of $2.8$ is not true, take for instance $\mathbf{m}=(x_1,x_2,x_3)\subset K[x_1,x_2,x_3]$. By Stanley's conjecture, it would be expected that any monomial ideal $I\subset S=K[x_1,\ldots,x_n]$ with $\sdepth(I)=1$ has also $\depth(I)=1$. But $\depth(I)=1$ if and only if $\depth(S/I)=0$, which is equivalent with $I^{sat}\neq I$. So, by $2.8$, it follows that if $I\subset S:=K[x_1,x_2,x_3]$ has $\sdepth(I)=1$ then $\depth(I)=1$.
Unfortunatelly, a similar result to Lemma $2.7$ is not true for $n\geq 4$.}

\emph{Let $S=K[x_1,x_2,x_3,x_4]$, $Q_1=(x_2^3,x_3^2,x_4)$, $Q_2=(x_1^3,x_3,x_4^2)$, $Q_3=(x_1^2,x_2,x_4^3)$, $Q_4=(x_1,x_2^2,x_3^3)$ and $I=Q_1\cap Q_2\cap Q_3\cap Q_4$. With similar notations as those of $2.7$, one can easily see that $I_k=I\cap S_k = \bigcap_{j=1}^4 (Q_j\cap S_k)$ is a reduced primary decomposition of $I_k$. In particular, $\mathbf{m}_k = \sqrt{Q_j\cap S_j} \in Ass(S_j/I_j)$ and thus $I_j^{sat}\neq I_j$.}
\end{obs}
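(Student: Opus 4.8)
The plan is to verify the three assertions of the remark separately, using only Theorem $2.4$, Proposition $2.8$, Corollary $1.6$ and the standard dictionary between depth and associated primes; no new machinery is needed.

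\textbf{The converse of $2.8$.} I would take $I=\mathbf{m}=(x_1,x_2,x_3)$. Here $g(\mathbf{m})=c(\mathbf{m})=3$ and $\mathbf{m}=\mathbf{m}'$ (since $GCD(x_1,x_2,x_3)=1$), so Theorem $2.4$ gives $\sdepth(\mathbf{m})=2$. On the other hand $S/\mathbf{m}=K$, whence $\depth(S/\mathbf{m})=0$; by the equivalence used in the proof of Corollary $1.6$ this means $(\mathbf{m}:\mathbf{m})\neq\mathbf{m}$, i.e. $\mathbf{m}\neq\mathbf{m}^{sat}$ (in fact $\mathbf{m}^{sat}=S$). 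Thus $\mathbf{m}$ is not saturated although $\sdepth(\mathbf{m})=2$, which refutes the converse of Proposition $2.8$.

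\textbf{The implication $\sdepth(I)=1\Rightarrow\depth(I)=1$ for $n=3$.} I would first note that the depth lemma applied to $0\to I\to S\to S/I\to 0$ (with $\depth S=3$) gives $\depth(S/I)=0\Rightarrow\depth(I)=1$. Next, exactly as in the proof of Corollary $1.6$, $\depth(S/I)=0\iff(I:\mathbf{m})\neq I\iff I\neq I^{sat}$. Finally, the contrapositive of Proposition $2.8$ shows that $\sdepth(I)=1$ forces $I\neq I^{sat}$: a principal ideal has $\sdepth=3$, so the hypothesis $\sdepth(I)=1$ already excludes the principal case, and $\sdepth(I)\neq 2$ then rules out $I=I^{sat}$. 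Chaining these, $\sdepth(I)=1\Rightarrow I\neq I^{sat}\Rightarrow\depth(S/I)=0\Rightarrow\depth(I)=1$.

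\textbf{Failure of a Lemma $2.7$ analogue at $n=4$.} For the stated $I=\bigcap_{j=1}^4 Q_j\subset K[x_1,x_2,x_3,x_4]$ I would proceed as follows. Each $\sqrt{Q_j}=P_j$ is the height-$3$ monomial prime omitting $x_j$, so $\operatorname{Ass}(S/I)\subseteq\{P_1,\dots,P_4\}$ and $\mathbf{m}=(x_1,x_2,x_3,x_4)\notin\operatorname{Ass}(S/I)$; hence $I=I^{sat}$. Moreover $\prod_{l\neq i}x_l\in\sqrt{I}=\bigcap_j P_j$ but $\prod_{l\neq i}x_l\notin(x_i)$, so $I\not\subseteq(x_i)$ for each $i$, i.e. $GCD(G(I))=1$ and $I=I'$; thus the hypotheses of any would-be generalization of Lemma $2.7$ hold. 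Now fix $k$ and intersect with $S_k=K[x_l:\,l\neq k]$: since $x_k$ does not occur in $Q_k$, one gets $Q_k\cap S_k=Q_k$, which is $\mathbf{m}_k$-primary, whereas for $j\neq k$ the ideal $Q_j\cap S_k$ is generated by the pure powers of $Q_j$ other than the $x_k$-power and is therefore primary to a height-$2$ prime. So $\mathbf{m}_k$ is the radical of exactly one component of $I_k=\bigcap_j(Q_j\cap S_k)$. To confirm this component is not redundant, equivalently that $\mathbf{m}_k\in\operatorname{Ass}(S_k/I_k)$, equivalently $I_k\neq I_k^{sat}$, I would exhibit a witness: for $k=1$, where $I_1=(x_2^3,x_3^2,x_4)\cap(x_3,x_4^2)\cap(x_2,x_4^3)\cap(x_2^2,x_3^3)$, the monomial $w_1=x_2^2x_3$ satisfies $\mathbf{m}_1 w_1\subset I_1$ but $w_1\notin I_1$, so $(I_1:w_1)=\mathbf{m}_1$. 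The remaining three cases follow by symmetry: the double transpositions $(12)(34)$, $(13)(24)$, $(14)(23)$ each permute $\{Q_1,\dots,Q_4\}$ (for instance $(12)(34)$ sends $Q_1\mapsto Q_2$ and $Q_3\mapsto Q_4$), hence induce automorphisms of $S$ fixing $I$ and acting transitively on the indices, carrying $S_1,I_1$ to $S_k,I_k$. Therefore $I_k\neq I_k^{sat}$ for every $k\in[4]$ while $I=I'=I^{sat}$, so no analogue of Lemma $2.7$ can hold when $n=4$.

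I expect the only real work to lie in this third part, namely tracking the ideals $Q_j\cap S_k$ and checking the witness $w_k$, i.e. confirming that the $\mathbf{m}_k$-primary component of $I_k$ is genuinely present. The symmetry under the Klein four-group reduces all of this to the single computation at $k=1$, after which each verification is a routine monomial membership check, so I anticipate bookkeeping rather than any conceptual obstacle.
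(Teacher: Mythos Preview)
Your proposal is correct and follows the same line as the paper's remark in all three parts. The only place you add anything is the $n=4$ example: where the paper simply asserts that $I_k=\bigcap_{j}(Q_j\cap S_k)$ is a \emph{reduced} primary decomposition (so that $\mathbf{m}_k\in\operatorname{Ass}(S_k/I_k)$), you instead produce an explicit socle witness $w_1=x_2^2x_3$ and then transport it by the Klein four-group symmetry $(12)(34),(13)(24),(14)(23)$, which you correctly observe permutes $\{Q_1,\dots,Q_4\}$ and hence fixes $I$. This is a cleaner verification than checking irredundancy of all four components directly, and it also makes explicit the ambient hypotheses $I=I'$ and $I=I^{sat}$ that the paper leaves implicit.
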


\vspace{2mm} \noindent {\footnotesize
\begin{minipage}[b]{15cm}
 Mircea Cimpoeas, Institute of Mathematics of the Romanian Academy, Bucharest, Romania\\
 E-mail: mircea.cimpoeas@imar.ro
\end{minipage}}
 
\end{document}